 \numberwithin{equation}{section}
\definecolor{darkgreen}{rgb}{0,0.45,0} 
\newcommand{\cc}{\ensuremath{\mathcal C}\xspace}
\newcommand{\Vect}{\textsf{Vect}\xspace}
\newcommand{\Hopf}{{\sf Hopf}\xspace}
\newcommand{\HKer}{{\sf HKer}\xspace}
\renewcommand{\phi}{\varphi}
\def\1c#1{\stackrel{#1}{\to}}
\newcommand{\ot}{\otimes}
  \newtheorem{proposition}{Proposition}[section]
  \newtheorem{lemma}[proposition]{Lemma}
  \newtheorem{corollary}[proposition]{Corollary}
  \newtheorem{theorem}[proposition]{Theorem}
  \theoremstyle{definition}
  \newtheorem{definition}[proposition]{Definition}
  \theoremstyle{remark}
  \newtheorem{remark}[proposition]{Remark}
  \newcounter{c}
  \renewcommand{\[}{\setcounter{c}{1}$$}
  \newcommand{\etyk}[1]{\vspace{-7.4mm}$$\begin{equation}\Label{#1}
  \addtocounter{c}{1}}
  \renewcommand{\]}{\ifnum \value{c}=1 $$\else \end{equation}\fi}
\begin{document}

 \title{A semi-abelian extension of a theorem by Takeuchi}

\author{Marino Gran, Florence Sterck}
\address{Institut de Recherche en Math\'ematique et Physique, Universit\'e catholique de Louvain, Chemin du Cyclotron 2, 1348 Louvain-la-Neuve, Belgium}
\email{marino.gran@uclouvain.be, florence.sterck@uclouvain.be}

\author{Joost Vercruysse}
\address{D\'epartement de Math\'ematique, Universit\'e Libre de Bruxelles, Belgium}
\email{jvercruy@ulb.ac.be}

\thanks{ }

\date{\today}
\subjclass[2010]{18E10, 18D35, 18G50, 16T05, 16B50, 18B40. }
\keywords{Cocommutative Hopf algebras, semi-abelian categories, crossed modules, internal groupoids, categorical commutator.} 

\begin{abstract}
We prove that the category of cocommutative Hopf algebras over a field is a semi-abelian category. This result extends a previous special case of it, based on the Milnor-Moore theorem, where the field was assumed to have zero characteristic. Takeuchi's theorem asserting that the category of commutative and cocommutative Hopf algebras over a field is abelian immediately follows from this new observation. We also prove that the category of cocommutative Hopf algebras over a field is action representable.
  We make some new observations concerning the categorical commutator of normal Hopf subalgebras, and this leads to the proof that two definitions of crossed modules of cocommutative Hopf algebras are equivalent in this context.
\end{abstract}  
\maketitle

%%%%%%%%%%%%%%%%%%%%%%%%%%%%%%%  INTRODUCTION  %%%%%%%%%%%%%%%%%%%%%%%%%%%%%%

\section{Introduction}\label{sect:torsion-free}
The category $\Hopf_{K, coc}$ of cocommutative Hopf algebras over a field $K$ is known to share many exactness properties with the categories of groups and of Lie algebras. It was already noted by Yanagihara in \cite{Yanagihara,Yanagihara2} that some classical isomorphism theorems from group theory have their natural counterpart for cocommutative Hopf algebras. This work was based on a theorem by Newman \cite{New}, establishing a useful correspondence between left ideals that are also coideals and Hopf subalgebras of a given cocommutative Hopf algebra. 

More recently it was observed \cite{GKV} that, when the base field $K$ has characteristic zero, the category $\Hopf_{K, coc}$ is semi-abelian (in the sense of \cite{JMT}). This fact implies that many of the exactness properties of the category of cocommutative Hopf algebras follow directly from the axioms of semi-abelian category, and this has opened the way to explore some new connections between categorical algebra and Hopf algebra theory \cite{GKV2, VW, GVdL}. As a matter of fact, the proof given in \cite{GKV} depended on the Milnor-Moore theorem 
providing a canonical decomposition of any cocommutative Hopf algebra as a semi-direct product (also called smash product in the literature) of a group Hopf algebra acting on a primitively generated Hopf algebra \cite{MM}. This theorem, however, only holds when the characteristic of the field $K$ is zero, so that the fact that $\Hopf_{K, coc}$ is semi-abelian could only be proved under this additional assumption.

 A first goal of this paper is to show that the category $\Hopf_{K, coc}$ of cocommutative Hopf algebras over \emph{any} field $K$ is semi-abelian, and also action representable in the sense of \cite{BJK}. The fact that $\Hopf_{K, coc}$ is semi-abelian can be seen as a non-commutative version of Takeuchi's theorem asserting that the category of commutative and cocommutative Hopf algebras over a field $K$ is abelian \cite{Takeuchi} (that extends its finite dimensional version due to Grothendieck \cite{Sweedler}). Indeed, Takeuchi's theorem easily follows from the fact that $\Hopf_{K, coc}$ is semi-abelian, since the category of abelian objects in $\Hopf_{K, coc}$ is then abelian, and it is isomorphic to the category of commutative and cocommutative Hopf algebras over $K$.
We then prove that the category $\Hopf_{K, coc}$ is also an action representable category \cite{BJK}, and provide an algebraic description of the categorical commutator in the sense of Huq of two normal Hopf subalgebras in $\Hopf_{K, coc}$ is then given. Finally, we use the fact that $\Hopf_{K, coc}$ is semi-abelian to prove that the notion of crossed module of cocommutative Hopf algebras (in the sense of \cite{Vilaboa, Majid}) becomes a special case of the categorical notion of internal crossed module (in the sense of \cite{Jan}) in the category of cocommutative Hopf algebras. One can deduce from these results that the category of crossed modules of cocommutative Hopf algebras is also semi-abelian.
 
 \section{ $\Hopf_{K, coc}$ is a semi-abelian category}\label{preliminaries}
 A morphism $p$ in a category $\cc$ is called a {\em regular epimorphism} if it is the coequalizer of two morphisms in $\cc$. A finitely complete category \cc is \emph{regular} if any arrow $f \colon A \rightarrow B$ factors as a regular epimorphism $p \colon A \rightarrow I$ followed by a monomorphism $i \colon I \rightarrow B$ and if, moreover, these factorizations are pullback-stable. 
A {\em relation} on an object $A$ in $\cc$ is
a triple $(R, r_1,r_2)$, where $R$ is an object of $\cc$ and $r_1,r_2:R\to A$ is a pair of jointly monic morphisms in $\cc$. A relation $(R,r_1,r_2)$ on $A$ is called:
 \begin{itemize}
 \item \emph{reflexive} if there is a (unique) morphism $\delta \colon A \rightarrow R$ such that $r_1 \cdot \delta  = 1_A = r_2 \cdot \delta  $; 
 \item 
 \emph{symmetric} if there is a (unique) morphism $\sigma \colon R \rightarrow R$ such that $r_1 \cdot \sigma = r_2$ $r_2 \cdot \sigma = r_1$;
\item  \emph{transitive} if there is a (unique) morphism $\tau \colon R \times_ A R \rightarrow R$ such that $r_1 \cdot \tau = r_1 \cdot \pi_1$ and $r_2 \cdot \tau = r_2 \cdot \pi_2$, where $(R \times_ A R, \pi_1, \pi_2)$ is the pullback of $r_1$ and $r_2$.\\
 \end{itemize}
 An {\em equivalence relation} in $\cc$ is a relation $R$ on $A$ that is reflexive, symmetric and transitive. A regular category \cc is \emph{exact} (in the sense of Barr \cite{Ba}) if any equivalence relation $R$ in \cc is \emph{effective}, i.e. it is the kernel pair of some morphism in \cc. This property means that for any equivalence relation there is a morphism $f \colon A \rightarrow B$ in \cc such that the following square is a pullback:
 $$
 \xymatrix{R \ar[r]^{r_2} \ar[d]_{r_1} & A \ar[d]^f \\
 A \ar[r]_f & B.
 }
 $$
 
 A \emph{semi-abelian category} is an exact category \cc, that is also pointed (i.e. it has a zero object), finitely cocomplete and \emph{protomodular} in the sense of \cite{Bourn}. In the presence of a zero object, the protomodularity can be expressed by simply asking that the \emph{Split Short Five Lemma} holds in \cc.

Among the examples of semi-abelian categories there are the categories of groups, Lie algebras, (associative) rings, loops, crossed modules, compact groups, Heyting semilattices, ${\mathbb C}^*$-algebras, and the dual of the category of pointed sets.
As explained in \cite{JMT}, semi-abelian categories are suitable to define and study (co)homology of non-abelian structures \cite{EGV}, and to define and develop a categorical approach to commutator and radical theories. In a semi-abelian category there is also a natural notion of semi-direct product, internal action \cite{BJK} and of crossed module \cite{Jan}. We refer to \cite{BB} for more details about the basic notions and properties of semi-abelian categories. 

The following reformulation of the notion of a regular category will be useful:
\begin{lemma}\label{regularity}
Let \cc be a finitely complete category. Then \cc is a regular category if and only if  
\begin{enumerate}
\item any arrow in \cc factors as a regular epimorphism followed by a monomorphism; 
\item given any regular epimorphism $f \colon A \rightarrow B$ and any object $E$, the induced arrow $1_E \times f \colon E \times A \rightarrow E \times B$ is a regular epimorphism;
\item  regular epimorphisms are stable under pullbacks along split mono-morphisms.
\end{enumerate}
\end{lemma}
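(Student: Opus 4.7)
The ``only if'' direction is essentially immediate: if $\cc$ is regular, then (1) holds by definition, and pullback-stability of regular epimorphisms gives (2) (as the pullback of $f\colon A\to B$ along $\pi_1\colon E\times B\to B$ is $1_E\times f$) and (3) as a special case.

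The interesting direction is to show that (1), (2), (3) imply pullback-stability of regular epimorphisms along \emph{arbitrary} morphisms. My plan is the classical graph-decomposition trick. Given a regular epimorphism $f\colon A\to B$ and an arbitrary morphism $g\colon E\to B$, I factor $g$ through its graph as
$$
g = \pi_2 \cdot \langle 1_E, g\rangle,
$$
where $\langle 1_E, g\rangle \colon E \to E\times B$ and $\pi_2 \colon E\times B \to B$ is the second projection. The morphism $\langle 1_E, g\rangle$ is a split monomorphism, with retraction given by the first projection $\pi_1 \colon E\times B \to E$.

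The pullback of $f$ along $g$ may then be computed as a composite of two pullbacks: first the pullback of $f$ along $\pi_2$, which yields the projection $1_E \times f\colon E\times A \to E\times B$, and then the pullback of $1_E \times f$ along the split monomorphism $\langle 1_E, g\rangle$. Assumption (2) guarantees that $1_E\times f$ is a regular epimorphism, and assumption (3) guarantees that its pullback along $\langle 1_E, g\rangle$ is again a regular epimorphism. Hence the pullback of $f$ along $g$ is a regular epimorphism, which is precisely the pullback-stability condition.

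Combined with (1), this gives the factorization property together with pullback-stability, i.e.\ regularity. The only subtle point to verify carefully is that the composite of the two successive pullback squares indeed computes the pullback of $f$ along $g$; this is a standard consequence of the pullback lemma applied to the pasting of the two squares, using $g = \pi_2 \cdot \langle 1_E, g\rangle$. I expect no real obstacle here: the proof is a clean reduction of arbitrary pullback-stability to the two restricted forms (2) and (3) through the graph factorization.
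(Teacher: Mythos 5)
Your proposal is correct and follows essentially the same route as the paper: the paper likewise exhibits the pullback $E\times_B A\to E$ as the pullback of $1_E\times f$ along the split monomorphism $(1_E,p)\colon E\to E\times B$, then applies hypotheses (2) and (3). The only cosmetic difference is that the paper writes down the resulting pullback square directly (with $E\times_B A$ realized as an equalizer inside $E\times A$) rather than invoking the pasting lemma for the two successive pullbacks.
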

\begin{proof}
It is well known that the properties $(1)$, $(2)$ and $(3)$ hold in any regular category.

Conversely, if these properties hold, we need to prove that regular epimorphisms are pullback stable. Given any pullback
\begin{equation}\label{pullback}
\xymatrix{E \times_B  A \ar[r]^-{p_2} \ar[d]_{p_1} & A \ar[d]^f \\
E \ar[r]_p & B
}\end{equation}
where $f$ is a regular epimorphism, 
consider the diagram
$$\xymatrix{E \times_B  A \ar[r]^-{e} \ar[d]_{p_1} & E \times A \ar[d]^{1_E \times f} \\
E \ar[r]_-{(1_E,p)} & E \times B
}$$
where $e \colon E \times_B A \rightarrow E \times A$ is the equalizer of $p\cdot \pi_1$ and $f \cdot \pi_2$,and the arrows $\pi_1 \colon E \times A \rightarrow E$ and $\pi_2 \colon  E \times A \rightarrow A$ are the product projections. This diagram is a pullback,
and it then follows that $p_1$ is a regular epimorphism, since it is
the pullback of the regular epimorphism $1_E \times f$ along the split monomorphism $(1_E, p)$.
\end{proof}

 We are now going to study the regularity and the exactness of the category $\Hopf_{K,coc}$ of cocommutative Hopf algebras over an arbitrary fixed field $K$.
 Recall that $K$-coalgebra is a coassociative and counital coalgebra over $K$, that is a vector space $C$ endowed with linear maps $\Delta:C\to C\ot C$ and $\epsilon:C\to K$ satisfying $(id\ot \Delta)\cdot \Delta=(\Delta\ot id)\cdot\Delta$ (coassociativity) and $(id\ot \epsilon)\cdot \Delta=id=(\epsilon\ot id)\cdot\Delta$ (counitality). We use the classical Sweedler notation for calculations with the comultiplication,
and we shall write $\Delta(c)=c_1\ot c_2$ for any $c\in C$ (with the usual summation convention, where $c_1\ot c_2$ stands for $\sum c_1\ot c_2$). 

Coassociativity and counitality can then be expressed by the formulas
\begin{eqnarray*}
&c_1\ot c_{2,1}\ot c_{2,2}=c_{1,1}\ot c_{1,2}\ot c_2=c_1\ot c_2\ot c_3.\\
&c_1\epsilon(c_2)=c=\epsilon(c_1)c_2
\end{eqnarray*}
A two-sided coideal $I$ in a coalgebra $C$ is a $K$-linear subspace $I\subset C$ such that $\Delta(I)\subset I\ot C + C\ot I$ and $\epsilon(I)=0$. For any two-sided coideal $I$, the linear quotient $C/I$ is a coalgebra and the canonical projection $C\to C/I$ is a coalgebra morphism.

Recall that a $K$-bialgebra $(A,M,u,\Delta,\epsilon)$ is an algebra $A$ with multiplication $M:A \ot A\to A$ and unit $u:K\to A$ that is at the same time a coalgebra with comultiplication $\Delta:A\to A\ot A$ and counit $\epsilon: A\to K$ such that $M$ and $u$ are coalgebra morphisms or, equivalently, $\Delta$ and $\epsilon$ are algebra morphisms, which can be expressed in Sweedler notation as
\begin{eqnarray*}
(ab)_1\ot (ab)_2 = a_1b_1\ot a_2b_2 && 1_1\ot 1_2=1\ot 1\\
\epsilon(ab)=\epsilon(a)\epsilon(b) && \epsilon(1)=1.
\end{eqnarray*}
A Hopf $K$-algebra is a sextuple $(A,M,u,\Delta,\epsilon,S)$ where $(A,M,u,\Delta,\epsilon)$ is a bialgebra and $S:A\to A$ is a linear map, called the {\em antipode}, making the following diagrams commute
%satisfies the 
%condition
\[\xymatrix{
& A\otimes A \ar@<0.5ex>[rr]^-{S \otimes id} \ar@<-0.5ex>[rr]_-{id \otimes S} & &  A\otimes A \ar[dr]^-{M}  \\
A \ar[rr]_-{\epsilon} \ar[ur]^-{\Delta} & &K \ar[rr]_-{u}& & A.
   }\]
In the Sweedler notation, the commutativity of these diagrams can be written as
$$a_1S(a_2)=\epsilon(a)1_A=S(a_1)a_2,$$
for any $a\in A$.

A Hopf algebra $(A,M,u,\Delta,\epsilon,S)$ is \textit{cocommutative} if its underlying coalgebra is cocommutative, meaning that the comultiplication map $\Delta$ satisfies $\sigma \cdot \Delta = \Delta$, where $\sigma\colon A \otimes A \longrightarrow A \otimes A$ is the switch map $\sigma(a \otimes b)=(b \otimes a)$, for any $a\otimes b\in A \otimes A$. In Sweeder notation: $a_1\ot a_2=a_2\ot a_1$.

A morphism of Hopf algebras is a linear map that is both an algebra and a coalgebra morphism (the antipode is then automatically preserved). 
$\mathbf{Hopf}_{K,coc}$ is the category whose objects are cocommutative Hopf $K$-algebras and whose morphisms are morphisms of Hopf $K$-algebras.

 The category $\mathbf{Hopf}_{K,coc}$ is complete and cocomplete \cite{Sweedler, Porst}, and pointed, with zero object the base field $K$. $\mathbf{Hopf}_{K,coc}$ can be seen as the category $\mathsf{Grp}(\mathsf{Coalg}_{K, coc})$ of internal groups in the category $\mathsf{Coalg}_{K, coc}$
 of cocommutative coalgebras, since binary products in $\mathsf{Coalg}_{K, coc}$ are tensor products. Moreover, the equalizer of two coalgebra morphisms $f,g : A \rightarrow B$ in $\mathsf{Coalg}_{K, coc}$ always exists (see 2.4.3 in \cite{GP}, for instance). Indeed, the category $\mathsf{Coalg}_{K}$ of \emph{all} $K$-coalgebras has equalizers (since it is locally presentable, see Theorem $9$ in \cite{Porst2}), and any subcoalgebra of $A$ is cocommutative, since $A$ is so. Accordingly, the category $\mathsf{Coalg}_{K, coc}$ has binary products and equalizers, and is then finitely complete. From this remark it follows that $\mathbf{Hopf}_{K,coc}$ is a protomodular category, since so is any category of internal groups in a finitely complete category (see Example $5$ on p. 57 in \cite{Bourn}).
 
 Also in the category $\mathbf{Hopf}_{K,coc}$ the categorical product of two cocommutative Hopf algebras $A$ and $B$ is given by the tensor product
$(A \otimes B, p_1,p_2)$ where the projections $p_1 \colon A \otimes B \rightarrow A$ and $p_2 \colon A \otimes B \rightarrow B$ are defined by $p_1 (a \otimes b) = a \epsilon (b)$ and $p_2 (a \otimes b) = \epsilon (a) b$ (for any $a \otimes b \in A \otimes B$).
More generally,  the ``object part'' $A \times_B  C$ of the pullback 
\begin{equation*}
\xymatrix{A \times_B  C \ar[r]^-{p_2} \ar[d]_{p_1} & C \ar[d]^g \\
A \ar[r]_f & B
}\end{equation*}
of two morphisms $f:A \rightarrow B$ and $g:C \rightarrow B$ in $\Hopf_{K, coc}$ is given by
$$A \times_B C = \{a \otimes c \in A \otimes C \, \mid \,  a_1 \otimes f(a_2) \otimes c = a \otimes g(c_1) \otimes c_2 \},$$
and the projections $p_1$ and $p_2$ are defined by $p_1(a \otimes c)= a \epsilon (c)$ and $p_2(a \otimes c)= \epsilon (a) c$, for any $a \otimes c \in A \times_B C$.

The kernel of a morphism $f \colon A\to B$ in $\Hopf_{K, coc}$ is given by the inclusion $\HKer(f)  \rightarrow A$ of the Hopf subalgebra
$$\HKer(f) = \{ a \in A \, \mid \, f(a_1) \otimes a_2 = 1_B \otimes a \} { = \{ a \in A \, \mid \, a_1 \otimes f(a_2) =  a \otimes  1_B\}.}$$
 of $A$. Given any coalgebra $C$, we shall write $$C^+ = \{x \in C \mid \epsilon (x) = 0\}.$$ 
 Remark that for any coalgebra morphism $f:C\to D$, we have that 
$f(C^+)=f(C)^+$. Indeed, $y\in f(C)^+$ iff $y=f(x)$ for some $x\in C$ and $0=\epsilon(y)=\epsilon(f(x))=\epsilon(x)$ iff $x\in C^+$ and $y=f(x)$.

 The cokernel of an arrow $f \colon A \rightarrow B$ in $\Hopf_{K,coc}$ is given by the canonical quotient 
$$q \colon B \rightarrow B/Bf(A)^+B,$$
where $f(A) = \{ f(a) \, \mid \, a \in A \}$ is the direct image of $A$ along $f$, and $B f(A)^+ B$ is 
the twosided $B$-ideal generated by $f(A)^+$, which can be checked to be
a Hopf ideal (i.e.\ a two-sided ideal and two-sided coideal that is stable under the antipode). For more details about basic properties of Hopf algebras we refer to \cite{Sweedler, A}.

 Let $A$ be a Hopf algebra then for a left $A$-module $M$ we will denote the action of an element $a\in A$ on $m\in M$ by
$${}^am.$$
Since $A$ is a Hopf algebra, the category of left $A$-modules is closed monoidal with a strict closed monoidal forgetful functor to $K$-vector spaces, where the left $A$-action on the tensor product $M\ot N$ of two left $A$-modules $M$ and $N$ is given by
$$ {}^a (m\ot n)={}^{a_1}m\ot {}^{a_2}n$$
for all $m\ot n\in M\ot N$. The monoidal unit in the category of left $A$-modules is the ground field $K$, endowed with a left $A$-module structure by means of the counit $\epsilon:A\to K$. 
A left $A$-module coalgebra is a coalgebra (comonoid) object in the category of left $A$-modules. More explicitly, $C$ is a left $A$-module coalgebra if $C$ is a left $A$-module which is at the same time a coalgebra, whose comultiplication and counit are morphisms of left $A$-modules. In Sweedler notation, these last two conditions mean that
$$ \Delta_C({}^a c)={}^{a_1} c_1\ot {}^{a_2} c_2,\quad \epsilon_C({}^a  c)=\epsilon_A(a)\epsilon_C(c)$$
for all $a\in A$ and $c\in C$.

The following result, due to Newman \cite{New}, will play a central role in what follows. We shall adopt the formulation given in \cite{Schneider}. When $I$ is a left ideal of a Hopf algebra $A$ that is also a (two-sided) coideal, the quotient $A/I$ has the structure of a left $A$-module coalgebra,
and the canonical projection $\pi:A\to A/I$ is a morphism of $A$-module coalgebras. 
Conversely, given any surjective morphism $\pi:A\to B$ where $A$ is a cocommutative Hopf algebra and $B$ is a (cocommutative) $A$-module coalgebra, the vector space kernel of $\pi$ naturally has the structure of a left ideal and (two sided) coideal. 

\begin{theorem}\label{Newman}\cite{New}
Let $A$ be a cocommutative Hopf algebra on a field $K$. Then there is a bijective correspondence between $${\mathcal S}= \{ D \subset A \, \mid \, D \, \mathrm{ \, is \, a \, Hopf \, subalgebra \, of \,} A \} $$ and 
  $${\mathcal I}= \{ I  \subset A \, \mid \, I \, \mathrm{ \, is \, a \, left \, ideal  \, and \, a \,  two \, sided \, coideal \, of \,} A \}.$$ 
\begin{enumerate}
\item
The correspondence $\Phi{_A} \colon {\mathcal S} \rightarrow {\mathcal I}$ associates, with a Hopf subalgebra $D$ of $A$, the corresponding left ideal two-sided coideal given by $\Phi{_A} (D)= AD^+$ (which is 
%the subspace of $A$ generated by products of elements of $A$ and $D^+$
 the left $A$-module generated by $D^+$); 
\item Conversely, given a left ideal two-sided coideal $I$ in $A$, the inverse correspondence $\Psi{_A} \colon {\mathcal I} \rightarrow {\mathcal S}$ sends $I$ to the Hopf subalgebra of $A$ given by 
$$\Psi{_A}(I) = 
%\overline{\HKer} (\pi) = 
\{ x \in A \mid (id \otimes \pi) \Delta (x) = x \otimes \overline{1}\},$$
where $\pi$ is the canonical quotient $\pi : A \rightarrow A/I$ and $\overline{1}= \pi (1)$ is the equivalence class of the unit $1$ of the Hopf algebra $A$. 
\end{enumerate}
When $A$ is a cocommutative Hopf algebra, $B$ a left $A$-module cocommutative coalgebra and $f:A\to B$ a surjective morphism of $A$-module coalgebras we find that the vector space kernel $\ker (f)$ is a left ideal two-sided coideal in $A$, and moreover $\ker (f)=A\Psi{_A}(\ker (f))^+$.
\end{theorem}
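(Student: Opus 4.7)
My plan is to verify well-definedness of both maps, establish the easy inclusions directly, invoke a freeness result for the two hard inclusions, and finally deduce the ``module-coalgebra morphism'' statement as a reformulation. Given a Hopf subalgebra $D\subset A$, the set $\Phi_A(D)=AD^+$ is manifestly a left ideal; for the coideal property, the decomposition $\Delta(d)=(d_1-\epsilon(d_1)1_A)\ot d_2+1_A\ot d\in D^+\ot D+A\ot D^+$ valid for $d\in D^+$, combined with the fact that $\Delta$ is an algebra map, yields $\Delta(AD^+)\subset AD^+\ot A+A\ot AD^+$, while $\epsilon(AD^+)=0$ is immediate. Dually, for $I\in\mathcal{I}$ with projection $\pi\colon A\to A/I$, the set $\Psi_A(I)$ of right $\pi$-coinvariants is seen by direct computation to be a subalgebra containing $1_A$, and cocommutativity together with the antipode axiom yields stability under $S$.

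For the easy inclusions, if $d\in D$ and $\Delta(d)=d_1\ot d_2$ with $d_i\in D$, then $d_2-\epsilon(d_2)1_A\in D^+\subset\ker\pi$, so $\pi(d_2)=\epsilon(d_2)\overline{1}$, and counitality gives $(id\ot\pi)\Delta(d)=d\ot\overline{1}$; hence $D\subset\Psi_A(\Phi_A(D))$. Conversely, for $x\in\Psi_A(I)$ we apply $\epsilon\ot id$ to the defining identity $x_1\ot\pi(x_2)=x\ot\overline{1}$ to obtain $\pi(x)=\epsilon(x)\overline{1}$, whence $\Psi_A(I)^+\subset I$; since $I$ is a left ideal this yields $\Phi_A(\Psi_A(I))=A\Psi_A(I)^+\subset I$.

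The reverse inclusions $\Psi_A(\Phi_A(D))\subset D$ and $I\subset A\Psi_A(I)^+$ constitute the main obstacle. They rest on the structural fact---due essentially to Newman \cite{New} and formulated cleanly in \cite{Schneider}---that a cocommutative Hopf algebra is faithfully flat (in fact free) as a module over any of its Hopf subalgebras; granted this, faithfully flat descent forces equality in both cases, completing the bijection.

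For the last assertion, if $f\colon A\to B$ is a surjective morphism of $A$-module coalgebras, then $\ker f$ is an $A$-submodule of $A$ (hence a left ideal), and the kernel of a surjective coalgebra morphism is automatically a two-sided coideal; thus $\ker f\in\mathcal{I}$. The identity $\ker f=A\Psi_A(\ker f)^+$ is then precisely $\Phi_A\Psi_A(\ker f)=\ker f$, which is part of the bijection just established.
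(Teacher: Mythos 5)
The paper does not actually prove this statement: Theorem \ref{Newman} is quoted from Newman's paper (in Schneider's formulation) and used as a black box, so there is no internal argument to compare yours against; the only question is whether your sketch stands on its own. The routine parts do: the verification that $AD^+$ is a left ideal two-sided coideal, the two easy inclusions $D\subseteq \Psi_A(\Phi_A(D))$ and $A\Psi_A(I)^+\subseteq I$, and the reduction of the final assertion to $\Phi_A\Psi_A(\ker f)=\ker f$ are all correct.

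Two genuine gaps remain. First, for $\Psi_A(I)$ to belong to $\mathcal S$ it must be a \emph{subcoalgebra}, not merely a unital subalgebra stable under $S$; you never address $\Delta(\Psi_A(I))\subseteq \Psi_A(I)\otimes\Psi_A(I)$. (It does hold: coassociativity gives $\Delta(x)\in A\otimes \Psi_A(I)$, cocommutativity gives $\Delta(x)\in \Psi_A(I)\otimes A$, and over a field $(A\otimes V)\cap(V\otimes A)=V\otimes V$ --- but this has to be said.) Second, the two hard inclusions are not symmetric consequences of a single flatness statement. Faithful flatness of $A$ over the Hopf subalgebra $D$ does yield $\Psi_A(\Phi_A(D))\subseteq D$ by descent, but the other inclusion $I\subseteq A\Psi_A(I)^+$ is a statement about the quotient left $A$-module coalgebra $A/I$ and needs the dual input (faithful coflatness of $A$ over $A/I$, or Newman's original argument via the structure theory of cocommutative coalgebras); writing that ``faithfully flat descent forces equality in both cases'' elides precisely the half of the correspondence that is hardest. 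The attribution is also slightly off: faithful flatness of a cocommutative Hopf algebra over its Hopf subalgebras is due to Takeuchi rather than Newman, and freeness is a stronger claim than you need. Since the theorem is cited rather than proved in the paper, deferring these inputs to the literature is defensible, but as written your sketch does not yet close the bijection.
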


{
Recall that a Hopf subalgebra $B\subset A$ of a cocommutative Hopf algebra $A$ is called {\em normal} if for any $b\in B$ and $a\in A$, one has that $a_1bS(a_2)\in B$. The following corollary is well-known, but we include a proof for sake of clarity.
\begin{corollary}\label{normal}
For a Hopf subalgebra $B\subset A$ of a cocommutative Hopf algebra $A$, the following conditions are equivalent:
\begin{enumerate}
\item $B$ is a normal Hopf subalgebra;
\item the associated ideal $\Phi{_A}(B)$ is a Hopf ideal (hence the quotient $A/\Phi{_A}(B)$ is a Hopf algebra);
\item  the inclusion morphism $B \rightarrow A$ is a \emph{normal monomorphism}, i.e. the kernel of some morphism in $\Hopf_{K, coc}$.
\end{enumerate}
In other words, Newman's correspondence can be restricted to a correspondence between normal Hopf subalgebras and Hopf ideals.
\end{corollary}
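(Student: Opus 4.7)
The plan is to prove the cycle $(1)\Rightarrow(2)\Rightarrow(3)\Rightarrow(1)$, using Theorem~\ref{Newman} to pass between Hopf subalgebras of $A$ and left ideal two-sided coideals, and exploiting cocommutativity (in particular $S^2=\mathrm{id}$ and the $S_n$-symmetry of the $n$-fold coproduct) in an essential way. For $(1)\Rightarrow(2)$ I first observe that normality is symmetric in cocommutative Hopf algebras: substituting $S(a)$ for $a$ in the identity $a_1bS(a_2)\in B$ and using $\Delta(S(a))=S(a_2)\otimes S(a_1)$ together with $S^2=\mathrm{id}$ yields $S(a_1)ba_2\in B$. A short counit computation shows that both conjugations preserve $B^+$. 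For $a\in A$ and $b\in B^+$, the identities
$$ab=(a_1bS(a_2))\cdot a_3,\qquad ba=a_1\cdot(S(a_2)ba_3)$$
then give $AB^+\subseteq B^+A$ and $B^+A\subseteq AB^+$, so $\Phi_A(B)=AB^+$ is two-sided (in the second identity one recognizes $a_2\otimes a_3$ as $\Delta$ of a Sweedler factor of $a$ coming from $(\mathrm{id}\otimes\Delta)\Delta(a)$). Antipode-stability is then automatic from $S(ab)=S(b)S(a)\in B^+A=AB^+$, so $AB^+$ is a Hopf ideal.

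For $(2)\Rightarrow(3)$, if $AB^+$ is a Hopf ideal, the canonical projection $\pi\colon A\to A/AB^+$ is a morphism in $\Hopf_{K,coc}$. By Newman's bijection, $\Psi_A(\ker\pi)=\Psi_A(\Phi_A(B))=B$, and unwinding the definition of $\Psi_A$ shows $\Psi_A(\ker\pi)=\HKer(\pi)$. Hence the inclusion $B\hookrightarrow A$ is the kernel of $\pi$ in $\Hopf_{K,coc}$, i.e.\ a normal monomorphism.

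For $(3)\Rightarrow(1)$, write $B=\HKer(f)$ for some morphism $f\colon A\to C$ in $\Hopf_{K,coc}$. Given $a\in A$ and $b\in B$, one has to check that $f((a_1bS(a_2))_1)\otimes(a_1bS(a_2))_2=1_C\otimes a_1bS(a_2)$. Expanding the comultiplication via $\Delta\circ S=(S\otimes S)\sigma\Delta$ together with cocommutativity gives
$$\Delta(a_1bS(a_2))=a_1b_1S(a_3)\otimes a_2b_2S(a_4)$$
in the four-fold coproduct of $a$. Since $B$ is a subcoalgebra contained in $\HKer(f)$, the Sweedler factor $b_1$ of $b$ also lies in $\HKer(f)$, so $f(b_1)=\epsilon(b_1)1_C$; substituting and pulling the scalar $\epsilon(b_1)b_2=b$ out of the second tensor factor reduces the expression to $f(a_1S(a_3))\otimes a_2bS(a_4)$. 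Swapping the indices $2$ and $3$ (by $S_4$-symmetry of the four-fold coproduct, valid since $A$ is cocommutative) and then applying the antipode identity $\sum a_1S(a_2)\otimes a_3\otimes a_4=1_A\otimes a_1\otimes a_2$ yields $1_C\otimes a_1bS(a_2)$, whence $a_1bS(a_2)\in\HKer(f)=B$.

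The main technical obstacle is bookkeeping the Sweedler indices in the three- and four-fold coproducts, especially in $(3)\Rightarrow(1)$ where both $\Delta$, $S$ and $f$ must be juggled simultaneously. Once the $S_n$-symmetry coming from cocommutativity is invoked consistently, the three implications follow directly from Theorem~\ref{Newman}.
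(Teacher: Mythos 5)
Your proof is correct and follows essentially the same route as the paper's: the same cycle $(1)\Rightarrow(2)\Rightarrow(3)\Rightarrow(1)$, with $(1)\Rightarrow(2)$ resting on the same insertion of $a_1S(a_2)=\epsilon(a)1$ and the symmetric form of normality (you phrase it as $AB^+=B^+A$ where the paper directly checks the right-ideal property, a negligible difference), $(2)\Rightarrow(3)$ via Newman's correspondence identifying $\Psi_A(\Phi_A(B))$ with $\HKer(\pi)$, and $(3)\Rightarrow(1)$ by the identical four-fold-coproduct computation using cocommutativity and the antipode identity. No gaps.
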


\begin{proof}
$(1)\Rightarrow (2).$
Let $B$ be a normal Hopf subalgebra of $A$, and consider the associated left ideal, two-sided co-ideal $\Phi{_A}(B)=AB^+$. Let us show that it is also a right ideal. For any $ab\in AB^+$ (sum implicitly understood) and any $a'\in A$, we find that
$$(ab)a'= a\epsilon(a'_1)ba'_2 = aa'_1S(a'_2)ba'_3\in AB^+.$$
To see that $AB^+$ is stable under the antipode, remark that $S(ab)=S(b)S(a)$. Since $b\in B^+$ and $B$ is stable under the antipode, $S(b)\in B^+\subset AB^+$ and therefore $S(b)S(a)\in AB^+$ since we have just shown this is a right ideal. 

$(2)\Rightarrow (3).$
If $I=\Phi{_A}(B)$ is a Hopf ideal, then the quotient $A/\Phi{_A}(B)$ is a Hopf algebra. By Newman's correspondence we know that $B=\Psi{_A}(I)$. Observe that $\Psi{_A}(I)$ is exactly the kernel $\HKer(\pi)$ of the quotient $\pi:A\to A/I$.

$(3)\Rightarrow (1).$
Suppose that $B=\HKer (f)$ for some morphism $f:A\to C$ in $\Hopf_{K,coc}$. Then we know that 
$x\in\HKer (f)$ if and only if {$f(x_1) \ot x_2= 1 \ot x$. For any $a\in A$, we then find that
\begin{eqnarray*}
f(a_1x_1S(a_4)) \ot a_2x_2S(a_3) & = & f(a_1S(a_4)) \ot a_2x S(a_3)  \\ &=& f(a_1S(a_2)) \ot a_3x S(a_4) \\ &=& 1 \ot a_1xS(a_2),  \end{eqnarray*}
and therefore $a_1xS(a_2)\in \HKer (f)$.}
\end{proof}
}

\begin{lemma}\label{first conditions}
The category $\Hopf_{K, coc}$ of cocommutative Hopf algebras over a field $K$ satisfies condition $(1)$ and $(2)$ in Lemma \ref{regularity}.
\end{lemma}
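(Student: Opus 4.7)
My plan is to show that in $\Hopf_{K,coc}$ the regular epimorphisms are precisely the surjective Hopf morphisms, and to read off both conditions from this.

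For condition (1), given $f \colon A \to B$, I would take the set-theoretic image $f(A) \subseteq B$, which is a Hopf subalgebra since $f$ is a Hopf morphism, and factor $f = i \circ p$ with $p \colon A \to f(A)$ surjective and $i \colon f(A) \hookrightarrow B$ the subalgebra inclusion. The map $i$ is a monomorphism, so it remains to show that $p$ is a regular epimorphism. I would verify that $p$ is the cokernel, in $\Hopf_{K,coc}$, of the Hopf kernel inclusion $\HKer(p) \hookrightarrow A$. By the cokernel formula recalled in the excerpt, this cokernel is $A/A\,\HKer(p)^+ A$. Since $\HKer(p)$ is the Hopf kernel of a morphism it is a normal Hopf subalgebra, so by Corollary~\ref{normal} the left ideal $A\,\HKer(p)^+$ is already two-sided, and hence $A\,\HKer(p)^+ A = A\,\HKer(p)^+$. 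Applying the ``moreover'' part of Newman's Theorem~\ref{Newman} to the surjective $A$-module coalgebra morphism $p$, one obtains $\ker(p) = A\,\Psi_{A}(\ker(p))^+ = A\,\HKer(p)^+$, where $\ker(p)$ denotes the vector space kernel (one checks $\Psi_{A}(\ker(p)) = \HKer(p)$ directly from the definitions). Combining these identifications, the cokernel of $\HKer(p) \hookrightarrow A$ is $A/\ker(p)$, which via $p$ is canonically isomorphic to $f(A)$. Hence $p$ is a normal, and in particular a regular, epimorphism.

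For condition (2), I would first note that any regular epimorphism $f \colon A \to B$ is surjective: factoring $f = i \circ p$ as in (1), the regular (hence extremal) epimorphism property of $f$ together with the fact that $i$ is a monomorphism forces $i$ to be an isomorphism, so that $f$ itself is surjective. Since products in $\Hopf_{K,coc}$ coincide with tensor products over the field $K$, the morphism $1_E \times f$ is, at the underlying linear level, $\mathrm{id}_E \otimes f \colon E \otimes A \to E \otimes B$. Tensoring over the field $K$ is exact, so $\mathrm{id}_E \otimes f$ is surjective as a linear map, hence surjective in $\Hopf_{K,coc}$. By the characterization established in (1), any surjective Hopf morphism is a regular epimorphism, so $1_E \times f$ is a regular epimorphism, as required.

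The main obstacle in this plan is the identification of the cokernel of $\HKer(p) \hookrightarrow A$ with $A/\ker(p)$. This rests on two ingredients from the excerpt: the precise form of the cokernel in $\Hopf_{K,coc}$, and the sharper content of Newman's correspondence that, beyond matching Hopf subalgebras with left ideal two-sided coideals, provides the identity $\ker(p) = A\,\HKer(p)^+$ for any surjective morphism of $A$-module coalgebras. Once these are in hand, the remaining verifications are formal, and both conditions follow.
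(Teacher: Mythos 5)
Your proof is correct and follows essentially the same route as the paper: both identify the regular epi--mono factorization of $f$ with its linear image factorization by using Newman's correspondence together with normality of the Hopf kernel to get $A\,\HKer(f)^+A=A\,\HKer(f)^+=\ker(f)$, hence $A/A\,\HKer(f)^+A\cong f(A)$, and both deduce condition (2) from the fact that binary products in $\Hopf_{K,coc}$ are tensor products over $K$. The only (harmless, slightly more self-contained) difference is that the paper cites Chirvasitu for the coincidence of regular epimorphisms with surjections, whereas you derive both directions yourself from the factorization in (1) and the extremal-epimorphism argument.
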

\begin{proof}
As remarked in \cite{GKV}, given a morphism $f \colon A \rightarrow B$, its regular epimorphism-monomorphism factorization $i \cdot p$ in $\Hopf_{K, coc}$ is obtained by taking the cokernel $p$ of the kernel $k$ of $f$:
$$
\xymatrix{{\HKer}(f) \ar[r]^-k  & A \ar[dr]_p \ar[rr]^f & & B \\
&  & {\frac{A}{A{{(\HKer}(f))^+ A}} } \ar[ur]_i  &
}
$$

Indeed, notice that the vector space kernel $\ker(f)$ is a Hopf ideal when $f : A \rightarrow B$ is a Hopf algebra morphism, then thanks to Newman's theorem and its Corollary \ref{normal}, we know that $\ker(f) = A(\HKer(f))^+$ which is equal to $A(\HKer(f))^+A$ since $\HKer(f)$ is normal. Therefore, we have that
$${\frac{A}{A{{(\HKer}(f))^+ A}} } = \frac{A}{\ker(f)}  \cong f(A),$$ 
and the epi-mono factorization in $\Hopf_{K, coc}$ is obtained as the usual one in vector spaces.

To see that condition $(2)$ is satisfied, observe that in $\Hopf_{K, coc}$ the regular epimorphisms are the same as surjective morphisms \cite{Chirva}, and products are tensor products: $E\times A = E \otimes A$. Accordingly, the induced arrow $1_E \times f \colon E \times A \rightarrow E \times B$ is surjective whenever $f$ is surjective.
\end{proof}
As explained in \cite{Yanagihara2} (Corollary $2$) given a morphism $p \colon A \rightarrow B$ in $\Hopf_{K, coc}$ and a Hopf subalgebra $C \subseteq B$ of $B$, then the subset of $A$ defined by $$p^{-1}(C) = \{ x \in A \mid [ (p \otimes id_A) \Delta (x) - 1 \otimes x ] \in C^+ \otimes A \}$$
is a Hopf subalgebra of $A$. This subalgebra is called the \emph{h-inverse} of $C$ (along $p$). 
\begin{lemma}\label{order-preserving}
Consider a morphism $p \colon A \rightarrow B$ in $\Hopf_{K, coc}$. Then:
\begin{enumerate}[(i)]
\item
for all Hopf subalgebras $C$ of $B$, $p (p^{-1} (C)) \subseteq C$; 
\item
for all Hopf subalgebras $D$ of $A$, $D\subseteq p^{-1} (p (D))$.
\end{enumerate}
\noindent
In other words, the $h$-inverse and the direct image along $p$ define an order preserving correspondence between the lattices of Hopf subalgebras of $A$ and $B$. 

Consequently,
\begin{enumerate}[(i)]
\setcounter{enumi}{2}
\item for all Hopf subalgebras $C\subseteq B$ and $D\subseteq A$,
$$D\subseteq p^{-1}(C) \Leftrightarrow p(D)\subseteq C;$$
\item for all Hopf subalgebras $C\subseteq B$
$$C=p (p^{-1} (C)) \Leftrightarrow C=p(D), {\text{ for some $D\subseteq A$}}.$$
\end{enumerate}

\end{lemma}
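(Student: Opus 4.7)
The plan is to verify (i) and (ii) by a direct Sweedler-notation computation from the defining relation of the $h$-inverse, and then deduce (iii) and (iv) as purely formal consequences of these two inclusions.

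For (i), I would take $x\in p^{-1}(C)$, so that $(p\otimes id_A)\Delta(x) - 1\otimes x$ lies in $C^+\otimes A$, and apply $id_B\otimes \epsilon_A$ to both sides. Counitality collapses $(p\otimes id)\Delta(x)=p(x_1)\otimes x_2$ to $p(x)$ and sends $1\otimes x$ to $\epsilon(x)\cdot 1_B$, while the right-hand side lands in $C^+$. Hence $p(x)-\epsilon(x)\cdot 1_B\in C^+$, and since $1_B\in C$ this gives $p(x)\in C$. For (ii), I would take $x\in D$ and use the counit identity to rewrite $1\otimes x=\epsilon(x_1)\cdot 1\otimes x_2$, so that $(p\otimes id)\Delta(x) - 1\otimes x$ becomes $(p(x_1)-\epsilon(x_1)\cdot 1)\otimes x_2$. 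Because $D$ is a Hopf subalgebra, $\Delta(D)\subseteq D\otimes D$, so $p(x_1)\in p(D)$; together with $1\in p(D)$ and the fact that each $p(x_1)-\epsilon(x_1)\cdot 1$ has vanishing counit, this places the element inside $p(D)^+\otimes A$, which is exactly the statement $x\in p^{-1}(p(D))$.

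Statement (iii) is then purely formal: the forward implication $D\subseteq p^{-1}(C)\Rightarrow p(D)\subseteq C$ follows by applying $p$ and using (i), and the converse follows by noting that $p(D)\subseteq C$ entails $p(D)^+\subseteq C^+$, so the computation behind (ii) places $(p\otimes id)\Delta(x)-1\otimes x$ inside $C^+\otimes A$ for any $x\in D$. Finally, for (iv), the direction ($\Rightarrow$) is witnessed by $D=p^{-1}(C)$, and ($\Leftarrow$) follows by applying (ii) to any $D$ with $p(D)=C$ (giving $C=p(D)\subseteq p(p^{-1}(C))$) together with (i) (giving the reverse inclusion).

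The steps are entirely computational, so I expect no real obstacle; the only points that need attention are keeping the Sweedler summation honest (the sum $\sum(p(x_1)-\epsilon(x_1)\cdot 1)\otimes x_2$ lies in $p(D)^+\otimes A$ only after one invokes the subcoalgebra property of $D$ to ensure each $x_1\in D$) and the elementary observation that the inclusion $p(D)\subseteq C$ restricts to $p(D)^+\subseteq C^+$.
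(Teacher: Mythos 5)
Your proposal is correct and follows essentially the same route as the paper: parts (i) and (ii) are verified by the identical Sweedler computations (applying $id\otimes\epsilon$ for (i), and rewriting $1\otimes x=\epsilon(x_1)1\otimes x_2$ for (ii)), and (iii)--(iv) are deduced formally from these, just as the paper does after noting that $p$ and $p^{-1}$ form an adjoint pair of monotone maps. Your explicit remark that the subcoalgebra property $\Delta(D)\subseteq D\otimes D$ is what keeps the Sweedler sum inside $p(D)^+\otimes A$ is a point the paper leaves implicit, but it is not a substantive difference.
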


\begin{proof}
${(i)}$
 If $x \in p^{-1} (C)$, then $(p(x_1) \otimes x_2 - 1_C \otimes x) \in C^+ \otimes A$. By applying $id \otimes \epsilon$ to this element we get $p (x) - \epsilon (x)1_C \in C^+$, so that $p (x) \in C$.\\
${(ii)}$
First remark that $p(d)-\epsilon_D(d)1_B\in p(D)^+$, for any $d\in D$. Then we have 
\begin{eqnarray*}
p(d_1)\ot d_2-1_B\ot d&=&p(d_1)\ot d_2- \epsilon_D(d_1)1_B\ot d_2\\
&=&(p(d_1)-\epsilon_D(d_1)1_B)\ot d_2\in p(D)^+\ot A.
 \end{eqnarray*}
The two last statements follow since the direct and $h$-inverse image along $p$ define a pair of adjoint functors $$\xymatrix@=30pt{
{\mathsf{Sub}(A)  \, } \ar@<1ex>[r]_-{^{\perp}}^-{p^{-1}} & {\, \mathsf{Sub}(B) \, }
\ar@<1ex>[l]^-p  }
 $$ between the poset categories $\mathsf{Sub}(A)$ and $\mathsf{Sub}(B)$ of subobjects of $A$ and $B$, respectively. Let us make this more explicit. \\
$(iii)$
If $p(D)\subset C$ then it follows by the previous observations that 
$$D\subset p^{-1}(p(D))\subset p^{-1}(C).$$
The other implication is proven in the same way.\\

$(iv)$
In case $C=p(p^{-1}(C))$, we can take $D=p^{-1}(C)$. On the other hand, if $C=p(D)$ for some Hopf subalgebra $D$ of $A$ then by part $(ii)$ we have $D\subseteq p^{-1}(p(D))=p^{-1}(C)$. By applying $p$ one gets $C = p(D) \subseteq p(p^{-1}(C))$, and by part $(i)$ we obtain the desired equality.

\end{proof}

\noindent
 Let us now observe that the $h$-inverse $p^{-1}(C)$ is nothing but
the ``object part'' of the pullback of the inclusion morphism $i \colon C \rightarrow B$ along $p \colon A \rightarrow B$:
\begin{lemma}\label{inverse-p.b.}
Given an inclusion of a Hopf subalgebra  $i \colon C \rightarrow B$ of a cocommutative Hopf algebra $B$, then the diagram 
\begin{equation}\label{pullback-rho}
\xymatrix{p^{-1}(C)
\ar[r]^-{\hat{p}} \ar[d]_{j} & C \ar[d]^i \\
A \ar[r]_{p} & B.
}
\end{equation} 
is a pullback, where $j$ is the inclusion of $p^{-1}(C)$ in $A$, and $\hat{p}$ the restriction of $p$ to $p^{-1}(C)$.
\end{lemma}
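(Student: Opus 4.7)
My plan is to verify the universal property of the pullback directly, using Lemma \ref{order-preserving} as the main tool. Let $D$ be a cocommutative Hopf algebra with morphisms $f \colon D \to A$ and $g \colon D \to C$ satisfying $p \cdot f = i \cdot g$. I want to produce a unique morphism $h \colon D \to p^{-1}(C)$ such that $j \cdot h = f$ and $\hat p \cdot h = g$.

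Since $j \colon p^{-1}(C) \rightarrow A$ is the inclusion of a Hopf subalgebra, it is a monomorphism, so uniqueness of $h$ is automatic. For existence, it suffices to check that $f$ factors through $j$, i.e.\ that the Hopf subalgebra $f(D) \subseteq A$ is contained in $p^{-1}(C)$. By part $(iii)$ of Lemma \ref{order-preserving}, this is equivalent to $p(f(D)) \subseteq C$, and this is immediate from $p \cdot f = i \cdot g$, since $i \cdot g$ factors (set-theoretically) through $C$. Let $h$ be the corestriction of $f$ to $p^{-1}(C)$, so that $j \cdot h = f$ by construction. To verify the remaining triangle, post-compose with the monomorphism $i$: one has $i \cdot \hat p \cdot h = p \cdot j \cdot h = p \cdot f = i \cdot g$, so that $\hat p \cdot h = g$ as desired.

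There is essentially no obstacle in this argument: once the adjunction between direct image and $h$-inverse recorded in Lemma \ref{order-preserving} is in place, the pullback property follows formally from the fact that $j$ (and $i$) are monomorphisms. An alternative would be to identify $p^{-1}(C)$ with the explicit pullback $A \times_B C = \{a \otimes c \in A \otimes C \mid a_1 \otimes p(a_2) \otimes c = a \otimes i(c_1) \otimes c_2\}$ computed in $\Hopf_{K,coc}$ via an isomorphism $x \mapsto x_1 \otimes \hat p(x_2)$ with inverse $a \otimes c \mapsto a\epsilon(c)$, but this would require verifying Sweedler identities that the abstract argument avoids.
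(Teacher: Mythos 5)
Your proof is correct. Both you and the paper establish the lemma by verifying the universal property directly, and in both cases the only substantive point is that the test morphism into $A$ factors through $p^{-1}(C)$; the difference lies in how that membership is checked. The paper does it by a Sweedler computation: for a cone $(\alpha \colon T \to A,\ \beta \colon T \to C)$ it expands $(p \otimes id)\Delta(\alpha(t)) - 1 \otimes \alpha(t)$ and exhibits it as an element of $C^+ \otimes A$, in effect repeating the calculation of Lemma \ref{order-preserving}(ii) with $\alpha$ in place of an inclusion. You instead pass to the image Hopf subalgebra $f(D) \subseteq A$ and invoke the adjunction of Lemma \ref{order-preserving}(iii): from $p(f(D)) = i(g(D)) \subseteq C$ you conclude $f(D) \subseteq p^{-1}(C)$ with no further element computation. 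This is a genuinely more formal route that buys economy --- everything reduces to the already-established Galois connection --- at the cost of one tacit ingredient, namely that the set-theoretic image of a morphism of cocommutative Hopf algebras is a Hopf subalgebra (needed so that Lemma \ref{order-preserving}(iii) applies to $f(D)$); this fact is standard and is used elsewhere in the paper, e.g.\ in the epi--mono factorization in Lemma \ref{first conditions}, so the gap is only one of citation. Your treatment of uniqueness and of the second triangle via the monomorphisms $j$ and $i$ is exactly what the paper leaves implicit.
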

\begin{proof}
As it follows from Lemma \ref{order-preserving} (i), for any $x$ in $p^{-1}(C)$ the element $p(x)$ is in $C$, and the diagram \eqref{pullback-rho} is commutative. To check the universal property, consider two morphisms $\alpha \colon T \rightarrow A$ and $\beta \colon T \rightarrow C$ such that $ p \cdot \alpha = i \cdot \beta$, and let us show that $\alpha (t) \in p^{-1}(C)$. We have the equalities: \begin{eqnarray}
(p \otimes {id}_A)  \Delta (\alpha (t) ) - 1 \otimes \alpha (t) & =&  (p \cdot \alpha \otimes \alpha )(t_1 \otimes t_2) - \epsilon (t_1) 1 \otimes \alpha (t_2) \nonumber \\
& = & i \cdot \beta (t_1) \otimes \alpha (t_2) - \epsilon (t_1)1 \otimes \alpha (t_2) \nonumber \\
& = & [i \cdot \beta (t_1) - \epsilon (t_1) 1 ]\otimes \alpha (t_2), \nonumber   \end{eqnarray}
Since $\beta (t) -\epsilon_T(t) 1\in C^+$ for any $t\in T$, we obtain that 
 $\alpha (t) \in p^{-1}(C)$, as desired.
 
\end{proof}

\begin{lemma}\label{normal2}
When $p \colon A \rightarrow B$ is a surjective Hopf algebra morphism in $\Hopf_{K, coc}$. Then for any Hopf subalgebra $D$ of $A$, we have that
\begin{enumerate}[(i)]
\item if $D$ is normal in $A$, its direct image $p (D)$ is a normal Hopf subalgebra of $B$,
\item taking the direct image commutes with applying the correspondence $\Phi$ from Theorem \ref{Newman}, i.e.
$$ \Phi_B \cdot p(D)=p \cdot \Phi_A(D).$$
\end{enumerate}
\end{lemma}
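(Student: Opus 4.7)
The plan is to prove both parts by direct, short computations that exploit three facts about $p$: it is a coalgebra morphism, it preserves the antipode, and it is surjective. No deep machinery seems required; both statements should reduce to bookkeeping with Sweedler notation and the description $\Phi_A(D)=AD^+$ coming from Theorem \ref{Newman}.

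For part $(i)$, I would first observe that the image $p(D)$ is automatically a Hopf subalgebra of $B$, since $p$ preserves all the Hopf algebra operations. To prove normality, I would pick an arbitrary $b\in p(D)$ and $a'\in B$, and use the surjectivity of $p$ to write $b=p(d)$ for some $d\in D$ and $a'=p(a)$ for some $a\in A$. Then, using that $p$ is a coalgebra morphism and preserves the antipode,
$$a'_1\, b\, S(a'_2) = p(a_1)\, p(d)\, p(S(a_2)) = p\bigl(a_1 d S(a_2)\bigr).$$
Since $D$ is normal in $A$, the element $a_1 d S(a_2)$ lies in $D$, hence $a'_1 b S(a'_2)\in p(D)$, establishing normality.

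For part $(ii)$, the identity to be shown, unwinding the formula in Theorem \ref{Newman}, is
$$p\bigl(A D^+\bigr) \;=\; B\cdot p(D)^+.$$
Here I would use the remark made before Theorem \ref{Newman} that $p(D^+) = p(D)^+$, which holds because $p$ is a coalgebra morphism. Since $p$ is an algebra morphism, $p(AD^+)$ is the linear span of elements $p(ad)=p(a)p(d)$ with $a\in A$, $d\in D^+$, and the surjectivity of $p$ gives $\{p(a):a\in A\}=B$ together with $\{p(d):d\in D^+\}=p(D)^+$. Hence $p(AD^+)=B\cdot p(D)^+$, as required.

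I do not foresee a real obstacle: the only subtle point is making sure the image of a product set coincides with the product of images, which is exactly where surjectivity of $p$ is used. The proof is essentially three lines per item once the relevant definitions are unfolded.
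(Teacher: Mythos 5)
Your proof is correct and follows essentially the same route as the paper: part $(i)$ is the same computation $b_1 p(d) S(b_2) = p(a_1 d S(a_2)) \in p(D)$ using surjectivity to lift $b$ and the fact that $p$ is a coalgebra morphism preserving the antipode, and part $(ii)$ is the same chain $p(AD^+) = p(A)p(D^+) = Bp(D^+) = Bp(D)^+$ relying on surjectivity and the remark that $p(D^+) = p(D)^+$. No gaps.
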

\begin{proof}
$(i)$
Assume that $D$ is a normal Hopf subalgebra of $A$, so that $a_1 d S(a_2) \in D$, for any $a \in A$ and any $d \in D$. Since $p$ is surjective, for any $b$ in $B$ there exists $a$ in $A$ such that $p(a) = b$ (and then $b_1 \otimes b_2 = p(a_1) \otimes p(a_2)$). Hence, for any $d$ in $D$, \begin{align*}
 b_1p(d)S(b_2) &= p(a_1)p(d)S(p(a_2))\\
 &= p(a_1dS(a_2)) \in p(D),
\end{align*} 
since $a_1dS(a_2) \in D$ (since $D$ is normal in $A$). Thus, $p(D)$ is a normal Hopf subalgebra of $B$. \\

$(ii)$ Making use of surjectivity of $p$ in the third equality, it follows that
$$p\cdot \Phi_A(D) = p(AD^+) = p(A)p(D^+) = Bp(D^+)=Bp(D)^+=\Phi_B\cdot p(D).$$

\end{proof}

The following result %, based on Corollary 2 in \cite{Yanagihara2}, 
%will imply the regularity of $\Hopf_{K, coc}$:
proves that the pullback of an epimorphism along a monomorphism in $\Hopf_{K,coc}$ is again an epimorphism. To prove this, we use the observation from Lemma \ref{inverse-p.b.} that the pullback along a monomorphism in $\Hopf_{K,coc}$ can be computed by means of the $h$-inverse.

\begin{proposition}\label{surjective}
Consider a surjective morphism $p \colon A \rightarrow B$ in $\Hopf_{K, coc}$ and a Hopf subalgebra $C$ of $B$, with inclusion $i \colon C \rightarrow B$. Then the morphism $\hat{p}$ in the following pullback is also surjective.
\begin{equation*}
\xymatrix{p^{-1} (C) \ar[d]_{j} \ar[r]^-{\hat{p}} \ar[d]_{} & C \ar[d]^i \\
A \ar[r]_{p} & B.
}\end{equation*}
\end{proposition}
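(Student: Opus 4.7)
The plan is to invoke Lemma \ref{order-preserving}(iv): surjectivity of $\hat p$ is equivalent to the equation $p(p^{-1}(C)) = C$, which in turn is equivalent to the existence of \emph{some} Hopf subalgebra $D \subseteq A$ with $p(D) = C$. Hence the whole task reduces to constructing such a $D$, and I would produce one by transporting $C$ through Newman's correspondence.

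To do this, I first pass from $C$ to its associated left ideal two-sided coideal $\Phi_B(C) = BC^+ \subseteq B$, and consider the composite
$$q \; := \; \pi \circ p \colon A \to B/\Phi_B(C),$$
where $\pi$ is the canonical projection. Because $\pi$ is a morphism of left $B$-module coalgebras and $p$ is a Hopf algebra morphism, $q$ is a surjective morphism of $A$-module cocommutative coalgebras (with $A$ acting on the codomain via $p$). By the final clause of Theorem \ref{Newman}, its vector space kernel $J := \ker(q) = p^{-1}(\Phi_B(C))$ is then a left ideal and a two-sided coideal of $A$. Applying Newman's bijective correspondence, I set $D := \Psi_A(J)$, so that $D$ is a Hopf subalgebra of $A$ with $\Phi_A(D) = J$.

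The last step is to verify $p(D) = C$. Using Lemma \ref{normal2}(ii) (which requires no normality hypothesis on $D$) together with the surjectivity of $p$, one obtains
$$\Phi_B(p(D)) \; = \; p(\Phi_A(D)) \; = \; p(J) \; = \; p\bigl(p^{-1}(\Phi_B(C))\bigr) \; = \; \Phi_B(C),$$
and the injectivity of $\Phi_B$ (again Theorem \ref{Newman}) forces $p(D) = C$. Combined with Lemma \ref{order-preserving}(iv), this yields $p(p^{-1}(C)) = C$, i.e.\ $\hat p$ is surjective. The only step needing any real care is the identification of $J$ as a left ideal two-sided coideal of $A$; this is neatly handled by recognising $J$ as the vector space kernel of the $A$-module coalgebra map $q$ and invoking the final clause of Newman's theorem, rather than verifying the two-sided coideal condition by a direct tensor computation.
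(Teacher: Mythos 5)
Your proof is correct and follows essentially the same route as the paper's: reduce via Lemma \ref{order-preserving}(iv) to exhibiting $D$ with $p(D)=C$, form the $A$-module coalgebra quotient $A\to B/BC^+$, take $D=\Psi_A$ of its vector-space kernel, and close the argument with Lemma \ref{normal2}(ii) and the Newman bijection. The only cosmetic difference is that you write the kernel as the set-theoretic preimage $p^{-1}(\Phi_B(C))$ (where the paper writes $p(\ker(\pi\cdot p))=\ker(\pi)$, the same fact by surjectivity of $p$); just beware that this overloads the symbol $p^{-1}$, which elsewhere denotes the $h$-inverse of a Hopf subalgebra.
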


\begin{proof}
From Lemma \ref{inverse-p.b.}, we know that $\hat{p}$ is just given by the restriction of $p$.
Hence $\hat p$ is surjective if and only if $C= p (p^{-1} (C))$. By Lemma \ref{order-preserving}(iv), it is equivalent to prove that $C=p(D)$ for some Hopf subalgebra $D$ of $A$. To construct this algebra $D$, consider the quotient $B/BC^+$, which is a left $B$-module coalgebra and thus a left $A$-module coalgebra by restriction of scalars via $p$. 
Consider now the following diagram in the category $\Vect$ of vector spaces:
$$\xymatrix{ 
A \ar[dr]_{\pi \cdot p } \ar[r]^{p} & B \ar[d]^{\pi} \\
 & {B/BC^+.}
}$$
Since $\pi$ and $p$ are both surjective, we have
\begin{equation}\label{surj2}
p(\ker(\pi\cdot p))=\ker(\pi)=BC^+. 
\end{equation}
Furthermore, $\pi\cdot p$ is an $A$-module coalgebra morphism, and therefore the (vector space) kernel of $\pi\cdot p$ is a left ideal and a two-sided coideal in $A$. 
%Consequently, the (vector space) kernel of the $A$-module coalgebra morphism
%$$\xymatrix{A \ar[r]^-p & B \ar[r]^-\pi & B/BC^+}$$
 Using the correspondence of Theorem \ref{Newman}, we obtain therefore a Hopf subalgebra $D:=\Psi{_A}(\ker(\pi\cdot p))$ of $A$. To show that $C=p(D)$, let us observe that
$$\Phi_B(C)=\ker(\pi)=p(\ker(\pi\cdot p))=p\cdot \Phi_A\cdot \Psi_A(\ker(\pi\cdot p))=p\cdot \Phi_A(D)=\Phi_B\cdot p(D)$$
where we used \eqref{surj2} in the second equality, the Newman correspondence from Theorem \ref{Newman} in the third equality, and Lemma \ref{normal2}(ii) in the last equality. Applying once more the Newman correspondence, we find that 
$C= \Psi_B \cdot \Phi_B(C)= \Psi_B \cdot \Phi_B (p(D))=p(D)$ and this completes the proof.
\end{proof}

\begin{corollary}\label{regular}
The category $\Hopf_{K, coc}$ is regular. 
\end{corollary}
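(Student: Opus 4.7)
The plan is to invoke the characterization of regular categories established in Lemma \ref{regularity}. We already know from earlier observations that $\Hopf_{K, coc}$ is finitely complete: it is complete and cocomplete (as recalled before Lemma \ref{first conditions}), and in particular it has equalizers and finite products (with product given by the tensor product). So what remains is to verify the three conditions of Lemma \ref{regularity}.

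Conditions (1) and (2) are precisely the content of Lemma \ref{first conditions}, so those are immediate. Condition (3) asks that the pullback of a regular epimorphism along a split monomorphism is again a regular epimorphism. The key observation is that regular epimorphisms in $\Hopf_{K, coc}$ coincide with the surjective Hopf algebra morphisms (as noted in the proof of Lemma \ref{first conditions}), and any monomorphism in $\Hopf_{K, coc}$ is up to isomorphism the inclusion of a Hopf subalgebra; in particular this is true of every split monomorphism. Hence condition (3) becomes the assertion that in a pullback square of the shape considered in Proposition \ref{surjective}, with $p$ surjective and $i$ a Hopf subalgebra inclusion, the induced map $\hat{p}$ is surjective. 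But this is exactly what Proposition \ref{surjective} establishes (and for \emph{any} mono, not merely a split one), using Lemma \ref{inverse-p.b.} to identify the pullback with the $h$-inverse image.

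Putting these pieces together yields all three conditions of Lemma \ref{regularity}, and therefore $\Hopf_{K, coc}$ is regular. There is no real obstacle at this stage: the technical content (the correct description of pullbacks along monomorphisms via Newman's correspondence and the fact that $p$ restricts to a surjection onto $C$) has already been extracted in Lemmas \ref{order-preserving}, \ref{inverse-p.b.}, \ref{normal2} and in Proposition \ref{surjective}. The corollary is thus a one-line deduction from Lemma \ref{regularity}, Lemma \ref{first conditions} and Proposition \ref{surjective}.
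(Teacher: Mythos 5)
Your proof is correct and follows essentially the same route as the paper: both deduce the corollary from Lemma \ref{regularity} by citing Lemma \ref{first conditions} for conditions (1) and (2) and Proposition \ref{surjective} (via the identification of the pullback along a subalgebra inclusion with the $h$-inverse image) for condition (3). Your remark that a split monomorphism is in particular a Hopf subalgebra inclusion, so that Proposition \ref{surjective} applies, is exactly the implicit step in the paper's one-line argument.
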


\begin{proof}
In any pullback \eqref{pullback} where $f \colon A \rightarrow B$ is a monomorphism, and $p \colon E \rightarrow B$ a regular epimorphism, we know that $p_2 \colon E \times_C B \rightarrow C$ is surjective by Proposition \ref{surjective}. By Lemma \ref{regularity} and Lemma \ref{first conditions} the proof is then complete.
\end{proof}

\begin{theorem}\label{semi-abelian}
The category $\Hopf_{K, coc}$ is semi-abelian.
\end{theorem}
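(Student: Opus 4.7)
The preceding results show that $\Hopf_{K,coc}$ is pointed with zero object $K$, finitely cocomplete, protomodular (as a category of internal groups in $\mathsf{Coalg}_{K,coc}$), and regular by Corollary \ref{regular}, so only Barr exactness remains. The plan is to exploit the standard fact that in a pointed protomodular category an equivalence relation is controlled by its normalization, and then to close the loop using Newman's correspondence together with Corollary \ref{normal}.

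Given an equivalence relation $(R, r_1, r_2)$ on a cocommutative Hopf algebra $A$, I first set $N = \HKer(r_1)$ and view it as a Hopf subalgebra of $A$ via the composite $\bar{r}_2 \colon N \hookrightarrow R \xrightarrow{r_2} A$, which is a monomorphism because $r_1, r_2$ are jointly monic and $r_1$ vanishes on $N$. I then verify that $N$ is normal in $A$. Since $R$ is an internal equivalence relation in $\mathsf{Grp}(\mathsf{Coalg}_{K,coc})$, the jointly monic pair $(r_1, r_2)$ realizes $R$ as a sub-Hopf-algebra of $A \otimes A$ containing the diagonal $\Delta(A)$ (by reflexivity) and stable under the internal group antipode $S \otimes S$. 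A direct Sweedler computation --- translating the group-theoretic identity ``conjugation by $(a,a)$ sends $(1,n)$ to $(1, a n a^{-1})$'' using cocommutativity --- shows that for $a \in A$ and $n \in N$ the element $1 \otimes a_1 n S(a_2)$ lies in $R$, whence $a_1 n S(a_2) \in N$; this is exactly normality.

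By Corollary \ref{normal}, $\Phi_A(N) = A N^+$ is a Hopf ideal and there is a cokernel $q \colon A \to A/A N^+$; let $\mathrm{Eq}(q)$ denote its kernel pair. I then prove $R = \mathrm{Eq}(q)$ in two steps. First, an analogous Sweedler calculation shows that the ``convolution difference'' $r_1(y_1) S(r_2(y_2))$ lies in $N$ for each $y \in R$, and consequently $q \cdot r_1 = q \cdot r_2$; this provides a canonical inclusion $R \hookrightarrow \mathrm{Eq}(q)$ of relations on $A$. Second, the normalization of $\mathrm{Eq}(q)$ is the $\HKer$ of its first projection, which via the second projection equals $\HKer(q) \subseteq A$; by Newman's theorem and Corollary \ref{normal} this is exactly $\Psi_A(\Phi_A(N)) = N$. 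Hence $R$ and $\mathrm{Eq}(q)$ are reflexive relations on $A$ with $R \subseteq \mathrm{Eq}(q)$ and the same normalization, and in a pointed protomodular category this forces $R = \mathrm{Eq}(q)$; so $R$ is effective.

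The main delicate points I anticipate are the two Sweedler-level verifications --- normality of $N$ and the coequalizing property $q \cdot r_1 = q \cdot r_2$ --- both of which rest on exploiting cocommutativity of $A$, the antipode axiom, and the fact that $R$ is a sub-Hopf-algebra of $A \otimes A$ closed under the internal group operations. Once these are dispatched, the protomodular ``normalization-determines-relation'' principle combined with Newman's correspondence delivers effectivity cleanly, and together with regularity (Corollary \ref{regular}), pointedness, protomodularity, and finite cocompleteness this yields that $\Hopf_{K,coc}$ is semi-abelian.
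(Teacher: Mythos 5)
Your proposal is correct, but it reaches the conclusion by a genuinely different route from the paper. The paper does not verify Barr-exactness by hand: having established pointedness, protomodularity, finite (co)completeness and regularity, it invokes the ``old-style axioms'' characterization of semi-abelian categories (3.7 in \cite{JMT}), which reduces everything to the single condition that the direct image of a normal monomorphism along a regular epimorphism is again normal --- and that condition is Lemma \ref{normal2}(i), a two-line computation using surjectivity. You instead prove effectivity of equivalence relations directly: normalize $(R,r_1,r_2)$ to $N=r_2(\HKer(r_1))$, check $N$ is normal, form $q\colon A\to A/AN^+$, show $R\subseteq \mathrm{Eq}(q)$ with the same normalization, and conclude $R=\mathrm{Eq}(q)$ by the Split Short Five Lemma. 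This works, and you correctly isolate the one non-formal input: that $\HKer(q)=\Psi_A(\Phi_A(N))=N$, i.e.\ every normal Hopf subalgebra is the kernel of its cokernel via Newman's correspondence --- precisely the point where, say, topological groups fail to be exact. Two small remarks: your first Sweedler verification can be bypassed entirely, since Corollary \ref{normal} ((3)$\Rightarrow$(1)) applied to $r_1$ shows $\HKer(r_1)$ is normal in $R$, and conjugating by the reflexivity element $\delta(a)$ and pushing forward along $r_2$ (which splits $\delta$) gives $a_1 n S(a_2)\in N$ at once; and the second verification ($q\cdot r_1=q\cdot r_2$) does go through by the convolution-difference computation you describe. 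The trade-off is that the paper's argument is shorter because the general theorem of \cite{JMT} absorbs the work of deriving exactness, whereas yours is self-contained and exhibits the kernel pair explicitly, at the cost of carrying out the protomodularity bookkeeping yourself; both ultimately rest on the same ingredients, namely regularity, Newman's theorem and Corollary \ref{normal}.
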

\begin{proof}
The category $\Hopf_{K, coc}$ is pointed, with zero objet the base field $K$, and it is known to be protomodular \cite{Bourn}, since it can be seen as the category of internal groups in the category of cocommutative $K$-coalgebras. $\Hopf_{K, coc}$ is also (finitely) complete and cocomplete, as it follows immediately from the fact that it is a locally finitely presentable category \cite{Porst}. By Corollary \ref{regular} the category $\Hopf_{K, coc}$ is regular, so that it will be semi-abelian provided that the direct image of a normal monomorphism is again a normal monomorphism (see 3.7 in \cite{JMT}, for instance,  where this property is one of the so-called ``old axioms'' defining a semi-abelian category). This property is true by Lemma \ref{normal2} (i) (and it was also observed in \cite{Yanagihara2,VW}, for instance).
\end{proof} 

From this theorem one can easily deduce Takeuchi's theorem concerning the category $\Hopf_{K, comm, coc}$ of commutative and cocommutative Hopf algebras:

\begin{theorem} \cite{Takeuchi}
The category $\Hopf_{K, comm, coc}$ of commutative and cocommutative Hopf algebras over a field $K$ is an abelian category. 
\end{theorem}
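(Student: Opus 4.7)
The plan is to deduce Takeuchi's theorem directly from Theorem \ref{semi-abelian} by combining two standard categorical facts. The first is that in any semi-abelian category $\cc$, the full subcategory $\mathrm{Ab}(\cc)$ of internal abelian group objects is itself an abelian category (see, e.g., \cite{BB}). The second is the description of $\Hopf_{K, coc}$ as the category $\mathsf{Grp}(\mathsf{Coalg}_{K, coc})$ of internal groups in cocommutative coalgebras, already recalled in Section \ref{preliminaries}. Granting these, the entire task reduces to identifying $\mathrm{Ab}(\Hopf_{K, coc})$ with $\Hopf_{K, comm, coc}$.

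To carry out this identification, I would argue as follows. Since $\Hopf_{K, coc} = \mathsf{Grp}(\mathsf{Coalg}_{K, coc})$, an abelian object in $\Hopf_{K, coc}$ is the same datum as an internal abelian group in $\mathsf{Coalg}_{K, coc}$. By protomodularity, any such abelian group structure on a fixed cocommutative Hopf algebra $A$ is uniquely determined, and it must be given by the existing unit $u \colon K \to A$, the existing antipode $S \colon A \to A$, and, as the only candidate for addition, the multiplication $M \colon A \otimes A \to A$. The map $M$ is automatically a morphism of coalgebras by the bialgebra axioms, and it is an algebra morphism out of the tensor product bialgebra $A \otimes A$ exactly when $(ab)(cd) = (ac)(bd)$ for all $a,b,c,d \in A$; specialising $a = d = 1$, this is equivalent to commutativity of $A$. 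Hence $A$ carries an abelian object structure in $\Hopf_{K, coc}$ if and only if $A$ is commutative, yielding $\mathrm{Ab}(\Hopf_{K, coc}) = \Hopf_{K, comm, coc}$.

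No serious obstacle is expected: each step is either standard categorical folklore (abelian objects form an abelian subcategory of a semi-abelian category) or a direct unwinding of the descriptions of products, units and the bialgebra compatibility given in Section \ref{preliminaries}. The only mildly delicate point is the uniqueness argument that pins down the addition as $M$, which is a purely formal consequence of protomodularity and so presents no real difficulty.
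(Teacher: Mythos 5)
Your proposal is correct and agrees with the paper on the overall reduction --- abelian objects in a semi-abelian category form an abelian category, so everything hinges on identifying $\mathsf{Ab}(\Hopf_{K, coc})$ with $\Hopf_{K, comm, coc}$ --- but you carry out that identification by a genuinely different argument. The paper invokes Bourn's characterization of abelian objects as those whose diagonal $X \rightarrow X \times X$ is a normal monomorphism, and then cites the observation of Vespa and Wambst that the comultiplication $\Delta \colon X \rightarrow X \otimes X$ is a normal monomorphism exactly when $X$ is commutative. You instead work directly with the internal abelian group structure: unitality (a consequence of pointed protomodularity) forces any addition $p \colon A \otimes A \rightarrow A$ to satisfy $p(a \otimes b) = p((a \otimes 1)(1 \otimes b)) = ab$, so the only candidate is the multiplication $M$, and $M$ is an algebra morphism out of the tensor product precisely when $(ac)(bd) = (ab)(cd)$, which at $a = d = 1$ is commutativity of $A$ (the Eckmann--Hilton interchange). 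This is sound, and the key computation is in fact the same one the paper uses in Lemma \ref{commutatorHopf} to characterize commuting subobjects. Your route is more self-contained, replacing the appeals to Bourn's Proposition and to the cited normality criterion by a short explicit calculation; the paper's route is shorter on the page by outsourcing both steps to the literature. Either way the conclusion $\mathsf{Ab}(\Hopf_{K, coc}) \cong \Hopf_{K, comm, coc}$ is correctly established.
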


\begin{proof}
It is well known that the category of abelian objects in a semi-abelian category is abelian (see Corollary $4.2$ in \cite{Gran}, for instance). It turns out that the category $\mathsf{Ab} ( \Hopf_{K, coc})$ of abelian objects in $\Hopf_{K, coc}$ is isomorphic to the category $\Hopf_{K, comm, coc}$ of commutative and cocommutative Hopf algebras. Indeed, it follows from Proposition $9$ in \cite{BournAbelian} that the abelian objects in any semi-abelian category $\mathbb C$ are precisely the objects $X$ with the property that the diagonal $\Delta \colon  X \rightarrow X \times X$ is a normal monomorphism ( = a kernel of some morphism in $\mathbb C$).
Accordingly, as observed in \cite{VW}, the abelian objects in $\Hopf_{K, coc}$ are precisely the cocommutative Hopf algebras $X$ such that the comultiplication $\Delta \colon X \rightarrow X \otimes X$ is a normal monomorphism or, equivalently, such that $X$ is commutative. 
\end{proof}

We finish this section by an immediate corollary that, similarly to the above, can be viewed as a semi-abelian extension of the result due to Grothendieck which states the category of finite dimensional, commutative and cocommutative Hopf algebras is abelian.

\begin{corollary}
The category $\Hopf_{K, coc}^{fd}$ of finite dimensional cocommutative Hopf algebras over a field $K$ is semi-abelian.

The opposite category $(\Hopf_{K, comm}^{fd})^{op}$ of the category $\Hopf_{K, comm}^{fd}$ of finite dimensional commutative Hopf algebras over a field $K$ is semi-abelian.
\end{corollary}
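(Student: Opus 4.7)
The plan is to deduce both statements from Theorem~\ref{semi-abelian}. The first follows by noting that finite-dimensionality is preserved under every categorical construction used in Section~\ref{preliminaries} to witness the semi-abelian axioms, and the second then follows formally by $K$-linear duality.

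For the first statement, I would verify that $\Hopf_{K, coc}^{fd}$ is a full subcategory of $\Hopf_{K, coc}$ closed under all the constructions used in the proof of Theorem~\ref{semi-abelian}. The zero object $K$ is one-dimensional; binary products are tensor products $A\ot B$, which stay finite-dimensional; equalizers and kernels are subcoalgebras of finite-dimensional objects; and cokernels $B/B\HKer(f)^+B$ and coequalizers are quotients of finite-dimensional objects. Hence finite limits, kernels, cokernels, and the regular epi/monomorphism factorization of Lemma~\ref{first conditions} are computed in $\Hopf_{K, coc}^{fd}$ exactly as in $\Hopf_{K, coc}$. The Split Short Five Lemma, pullback-stability of regular epimorphisms (Proposition~\ref{surjective}), and effectivity of equivalence relations therefore descend from $\Hopf_{K, coc}$ to $\Hopf_{K, coc}^{fd}$, yielding pointedness, protomodularity, regularity and Barr-exactness.

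For the second statement, I would invoke the contravariant equivalence given by the $K$-linear dual $V\mapsto V^{*}=\mathrm{Hom}_K(V,K)$: on finite-dimensional Hopf algebras this swaps multiplication with comultiplication, unit with counit, and commutativity with cocommutativity, and therefore restricts to a contravariant equivalence
$$\Hopf_{K, coc}^{fd}\simeq (\Hopf_{K, comm}^{fd})^{op}.$$
The semi-abelian structure established in the first part then transports across this equivalence, giving the second statement.

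The most delicate point is finite cocompleteness. Coequalizers and the initial object $K$ are manifestly in $\Hopf_{K, coc}^{fd}$, but the coproduct of two finite-dimensional cocommutative Hopf algebras, computed in the ambient category $\Hopf_{K, coc}$, need not be finite-dimensional: for instance, $K[\bz/2]\coprod K[\bz/2]$ is the group algebra of the infinite dihedral group. Producing binary coproducts inside $\Hopf_{K, coc}^{fd}$ therefore requires a construction different from the ambient one, and this is the step where the word ``immediate'' in the corollary needs careful justification.
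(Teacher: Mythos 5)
Your argument follows the same route as the paper's: the first statement is deduced from Theorem \ref{semi-abelian} by observing that $\Hopf_{K, coc}^{fd}$ is a full subcategory of $\Hopf_{K, coc}$ closed under finite limits and regular quotients, and the second by transporting the structure along the duality $\Hopf_{K, coc}^{fd}\simeq (\Hopf_{K, comm}^{fd})^{op}$. The difficulty you isolate at the end is, however, a genuine gap, and it is present in the paper's proof as well: that proof only addresses limits and regular quotients and says nothing about finite colimits, even though finite cocompleteness is part of the definition of a semi-abelian category adopted in Section \ref{preliminaries}.

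Moreover, the gap cannot be closed by a cleverer construction of coproducts inside the subcategory: binary coproducts simply do not exist in $\Hopf_{K, coc}^{fd}$. Indeed, the group-like elements of a finite-dimensional Hopf algebra $C$ are linearly independent, hence form a finite group $G(C)$; every Hopf algebra morphism restricts to a group homomorphism on group-likes; and $\mathrm{Hom}_{\Hopf_{K,coc}}(K[\Gamma],C)\cong \mathrm{Hom}_{\mathsf{Grp}}(\Gamma,G(C))$ for any group $\Gamma$. If $C$ were a coproduct of $K[\bz/2]$ with itself in $\Hopf_{K, coc}^{fd}$, then for every $n$ the two canonical involutions generating the dihedral group $D_n$ would induce a morphism $C\to K[D_n]$ whose restriction to group-likes is a surjection of $G(C)$ onto $D_n$, which is impossible once $2n>|G(C)|$. (This is the same phenomenon that prevents the category of finite groups from being semi-abelian.) What the closure properties you verify actually yield is that $\Hopf_{K, coc}^{fd}$ is pointed, Barr-exact and protomodular --- i.e.\ an exact homological category --- and, dually, the same for $(\Hopf_{K, comm}^{fd})^{op}$; the statement should be weakened accordingly, or ``semi-abelian'' replaced by a notion that does not require finite cocompleteness.
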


\begin{proof}
The first statement follows immediately from Theorem \ref{semi-abelian} along with the observation that $\Hopf_{K, coc}^{fd}$ is a full subcategory of $\Hopf_{K, coc}$ which is closed under finite limits and regular quotients.

The second statement follows from the fact that the vector space dual induces an equivalence between the categories  $\Hopf_{K, coc}^{fd}$ and $(\Hopf_{K, comm}^{fd})^{op}$.
\end{proof}

\section{Action representability of $\Hopf_{K, coc}$}
Recall that a pointed protomodular category \cc is action representable (equivalently, that it has representable object actions in the sense of \cite{BJK}) if the following property holds: 
given any object $X$ in $\cc$,  
there is a split extension  
\begin{equation}\label{splitgeneral}
\xymatrixcolsep{2pc}\xymatrix{
0 \ar[r]^-{} & X  \ar[r]^{i_1}  & \overline{X}  \ar@<-0.5ex>[r]_{p_2} & [X] \ar@<-0.5ex>[l]_{i_2} \ar[r] & 0 }\end{equation}
with kernel $X$
that is universal in the sense that, given any other split extension in \cc with kernel $X$ 
$$
\xymatrixcolsep{2pc}\xymatrix{
 0 \ar[r] & X  \ar[r]^k  & A \ar@<-0.5ex>[r]_-{f} & B \ar@<-0.5ex>[l]_-{s} \ar[r] & 0 }$$
there is a unique (up to isomorphism) $\chi \colon B \rightarrow [X]$ (and $\overline{\chi}\colon A\longrightarrow \overline{X}$) such that the following diagram of split exact sequences commutes \[\xymatrixcolsep{2pc}\xymatrix{
0 \ar[r]^-{}  & X \ar@{=}[d] \ar[r]^-{k}  & A \ar@<-0.5ex>[r]_-{f} \ar[d]^{\overline{\chi}} & B \ar@<-0.5ex>[l]_-{s} \ar[r] \ar[d]^{\chi} & 0
 \\
0 \ar[r]^-{} & X  \ar[r]^-{i_1}  & \overline{X} \ar@<-0.5ex>[r]_-{p_2} & [X] \ar@<-0.5ex>[l]_-{i_2} \ar[r] & 0.
}\] 
The object $[X]$ in \eqref{splitgeneral} is called a \textit{split extension classifier} for $X$. In the action representable category $\mathsf{Grp}$ of groups the object $[X]$ is simply given by the group $\mathsf{Aut}(X)$ of automorphisms of the group $X$. In the category $\mathsf{Lie_K}$ of Lie algebras over a field $K$ the split extension classifier $[A]$ of a Lie algebra $A$ is given by the Lie algebra $\mathsf{Der}(A)$ of derivations of $A$.

\begin{proposition}\label{action-rep}
The category $\Hopf_{K, coc}$ is action representable.
\end{proposition}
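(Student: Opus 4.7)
The plan is to exploit the identification of $\Hopf_{K,coc}$ with the category $\mathsf{Grp}(\mathsf{Coalg}_{K, coc})$ of internal groups in cocommutative $K$-coalgebras, and to construct the split extension classifier of a cocommutative Hopf algebra $X$ as its \emph{internal automorphism object} inside $\mathsf{Coalg}_{K, coc}$.

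The first step is to observe that $\mathsf{Coalg}_{K,coc}$ is a finitely complete cartesian closed category: the cartesian product is the tensor product, and since $\mathsf{Coalg}_{K,coc}$ is locally presentable with $-\otimes C$ preserving all colimits for every cocommutative coalgebra $C$ (colimits being created by the forgetful functor to $\mathsf{Vect}$), the functor $-\otimes C$ admits a right adjoint $[C,-]$ by the special adjoint functor theorem. Hence for any $X\in \Hopf_{K,coc}$ the internal endomorphism coalgebra $[X,X]$ is available. Next, I would single out inside $[X,X]$ the subcoalgebra $\mathsf{End}(X)$ of \emph{internal bialgebra endomorphisms} of $X$ as a finite limit expressing compatibility with multiplication, unit, comultiplication and counit; this is again an object of $\mathsf{Coalg}_{K,coc}$, equipped with the internal monoid structure given by composition. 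Its internal group of units $\mathsf{Aut}(X)$, obtained by the standard construction available in any finitely complete cartesian closed category, is then an internal group in $\mathsf{Coalg}_{K,coc}$, hence a cocommutative Hopf algebra, and it carries a tautological evaluation $\mathsf{Aut}(X)\otimes X\to X$ exhibiting an internal action on $X$. Setting $[X] := \mathsf{Aut}(X)$, the smash product $X\rtimes \mathsf{Aut}(X)$ together with its canonical inclusion and retraction is then my candidate for the split extension \eqref{splitgeneral}.

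For universality, any split extension $0\to X\to A\to B\to 0$ in $\Hopf_{K,coc}$ corresponds, since $\Hopf_{K,coc}$ is semi-abelian (Theorem~\ref{semi-abelian}), to an internal action of $B$ on $X$, i.e.\ to a morphism $B\otimes X\to X$ in $\mathsf{Coalg}_{K,coc}$ making $X$ a $B$-module Hopf algebra (given by the conjugation $b\otimes x\mapsto s(b_1)\,x\,S(s(b_2))$); by the defining adjunction of $[X,X]$ this data transpose to a unique morphism $B\to [X,X]$, which factors through $\mathsf{End}(X)$ and then through $\mathsf{Aut}(X)$ because $B$ is itself an internal group. This produces the required unique $\chi\colon B\to [X]$, and the associated $\overline{\chi}\colon A\to X\rtimes [X]$ is then forced by the split short five lemma valid in the protomodular category $\Hopf_{K,coc}$. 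The main obstacle will be this last identification: I expect it to take some care to verify rigorously that internal actions in $\Hopf_{K,coc}$ in the sense of \cite{BJK} correspond bijectively to $B$-module Hopf algebra structures on $X$, and that the equalizers and the internal group-of-units construction used to single out $\mathsf{Aut}(X)$ really remain inside $\mathsf{Coalg}_{K,coc}$ (rather than merely in $\mathsf{Coalg}_{K}$).
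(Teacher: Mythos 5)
Your proposal is correct and takes essentially the same route as the paper: the paper's proof simply invokes Theorem~4.4 of \cite{BJK} (a semi-abelian category of internal groups in a cartesian closed category is action representable), applied to $\Hopf_{K, coc} \cong \mathsf{Grp}(\mathsf{Coalg}_{K, coc})$ together with Theorem~\ref{semi-abelian} and the cartesian closedness of $\mathsf{Coalg}_{K, coc}$ from \cite{GP}. Your construction of the split extension classifier as the internal automorphism object $\mathsf{Aut}(X)$ inside $\mathsf{Coalg}_{K, coc}$ is precisely the content of that cited theorem, unpacked in this special case.
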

\begin{proof}
By Theorem $4.4$ in \cite{BJK} one knows that the category of internal groups in a cartesian closed category is always action representable, provided it is semi-abelian. The result then follows from Theorem \ref{semi-abelian} and the observation that the category $\Hopf_{K, coc}$ of cocommutative Hopf algebras is the category of internal groups in the cartesian closed category $\mathsf{Coal}_{K, coc}$ of cocommutative coalgebras (see \cite{GP} for the fact that $\mathsf{Coal}_{K, coc}$ is a cartesian closed category).
\end{proof}
 \begin{remark}
The same argument as in the proof of the Proposition here above implies that $\Hopf_{K, coc}$ is locally algebraically cartesian closed in the sense of \cite{Gray}, and then algebraically coherent in the sense of \cite{CGV}.
\end{remark}
\begin{remark}
An explicit description of the split extension classifier $[X]$ of a cocommutative Hopf algebra in $\Hopf_{K, coc}$ was given in \cite{GKV2} in the special case when the characteristic of the base field $K$ is zero, by using the canonical semi-direct product decomposition in the Milnor-Moore Theorem \cite{MM}. It would be interesting to give such a description in the case of a general field $K$.
\end{remark}

\section{Commutators in $\Hopf_{K, coc}$}
In any pointed category $\mathbf{C}$ with binary products, one says that two subobjects $x \colon X\rightarrow A $ and $y \colon Y \rightarrow A$ of the same object $A$ \emph{commute} (in the sense of Huq) \cite{Huq} if and only if there exists an arrow $p$ making the following diagram commute:

\begin{equation}\label{connector}
\begin{tikzpicture}[descr/.style={fill=white},scale=0.9]
\node (A) at (0,0) {$A$};
\node (C) at (2,2) {$Y$};
\node (D) at (0,2) {$X \times Y$};
\node (B) at (-2,2) {$X$};
  \path[-stealth]
 (B.south) edge node[left] {${x\,} $} (A.north west) 
 (C.south) edge node[right] {${\, y} $} (A.north east) 
 (B.east) edge node[above] {$(1,0)$} (D.west) 
 (C.west) edge node[above] {$(0,1)$} (D.east);
   \path[-stealth,dashed]
 (D.south) edge node[right] {$p$} (A.north);
\end{tikzpicture}
\end{equation}
Since $\mathsf{Hopf}_{K,coc}$ is \emph{protomodular}, such an arrow $p$ is unique, when it exists \cite{BG}. By taking into account the fact that the categorical product $X\times Y$ in $\mathsf{Hopf}_{K,coc}$ is the tensor product $X \otimes Y$ we get:
\begin{lemma}\label{commutatorHopf}
In $\mathsf{Hopf}_{K,coc}$, for any two Hopf subalgebras $x \colon X \rightarrow A$, and $y \colon Y \rightarrow A$ of a cocommutative Hopf algebra $A$, the following conditions are equivalent : 
\begin{itemize}
\item[(a)] there exists a unique morphism of Hopf algebras $p : X \otimes Y \rightarrow A$ such that diagram \eqref{connector} commutes;
\item[(b)]  $ab=ba$, $\forall a \in X$ and $\forall b \in Y$;
\item[(c)] $a_1b_1S(a_2)S(b_2) = \epsilon(a)\epsilon(b)$, $\forall a \in X$ and $\forall b \in Y$.
\end{itemize}
\end{lemma}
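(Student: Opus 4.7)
The plan is to handle (a) $\Leftrightarrow$ (b) by direct inspection, exploiting the fact recalled above that the categorical product in $\Hopf_{K,coc}$ is the tensor product algebra, and then to reduce (b) $\Leftrightarrow$ (c) to an identity in the convolution algebra $\mathrm{Hom}(X \otimes Y, A)$.

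For (a) $\Rightarrow$ (b), I would use that any such $p$ must be an algebra morphism with $p(a \otimes 1) = a$ and $p(1 \otimes b) = b$; the two decompositions $a \otimes b = (a \otimes 1)(1 \otimes b) = (1 \otimes b)(a \otimes 1)$ in $X \otimes Y$ then force $ab = p(a \otimes b) = ba$. For (b) $\Rightarrow$ (a), I would define $p(a \otimes b) := ab$ and verify it is a morphism of Hopf algebras: the algebra condition $p\bigl((a \otimes b)(a' \otimes b')\bigr) = aa'bb'$ matches $p(a \otimes b)p(a' \otimes b') = aba'b'$ precisely because of (b), while the coalgebra condition follows from cocommutativity of $A$ together with $\Delta$ being an algebra morphism (both sides give $a_1 b_1 \otimes a_2 b_2$). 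Uniqueness of $p$ is free from the protomodularity of $\Hopf_{K,coc}$ recalled earlier.

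For (b) $\Leftrightarrow$ (c), I would introduce the two parallel maps $\mu, \mu' \colon X \otimes Y \to A$ defined by $\mu(a \otimes b) = ab$ and $\mu'(a \otimes b) = ba$. A short Sweedler check using cocommutativity shows that each of them is a coalgebra morphism, so each is convolution-invertible in $\mathrm{Hom}(X \otimes Y, A)$ with inverse obtained by post-composition with $S$. Using $S(yx) = S(x)S(y)$, I compute
\[
\bigl(\mu * (S \circ \mu')\bigr)(a \otimes b) = \mu(a_1 \otimes b_1)\, S\bigl(\mu'(a_2 \otimes b_2)\bigr) = a_1 b_1\, S(b_2 a_2) = a_1 b_1 S(a_2) S(b_2),
\]
so condition (c) is exactly the statement that $\mu * (S \circ \mu')$ equals the unit $u_A \cdot \epsilon_{X \otimes Y}$ of convolution. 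Convolving this equality on the right by $\mu'$ and using $(S \circ \mu') * \mu' = u_A \cdot \epsilon_{X \otimes Y}$ yields $\mu = \mu'$, which is (b); the converse $\mu = \mu' \Rightarrow \mu * (S \circ \mu') = \mu' * (S \circ \mu') = u_A \cdot \epsilon_{X \otimes Y}$ is immediate.

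The only step that requires any real thought is recognizing (c) as the convolution identity $\mu * (S \circ \mu') = u_A\epsilon_{X \otimes Y}$ and verifying that both $\mu$ and $\mu'$ are coalgebra maps (which is where cocommutativity of $A$ enters); once this is in place, the equivalence with (b) is essentially formal, and the rest of the proof is routine bookkeeping in Sweedler notation.
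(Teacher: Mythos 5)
Your argument is correct. The equivalence $(a)\Leftrightarrow(b)$ is handled exactly as in the paper: the product structure on $X\otimes Y$ forces $p(a\otimes b)=ab$, the two factorizations of $a\otimes b$ give $(a)\Rightarrow(b)$, and condition $(b)$ makes $p(a\otimes b)=ab$ an algebra (hence Hopf) morphism, with uniqueness from protomodularity. Where you genuinely diverge is in $(b)\Leftrightarrow(c)$. The paper argues elementwise in Sweedler notation: $(b)\Rightarrow(c)$ is the one-line computation $a_1b_1S(a_2)S(b_2)=a_1S(a_2)b_1S(b_2)=\epsilon(a)\epsilon(b)$ (using that $S(a_2)\in X$ commutes with $b_1$), and $(c)\Rightarrow(b)$ is a chain of counit/antipode insertions culminating in an application of $(c)$ under the coproduct. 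You instead recognize $(c)$ as the convolution identity $\mu * (S\circ\mu')=u_A\epsilon$ for the two coalgebra maps $\mu(a\otimes b)=ab$ and $\mu'(a\otimes b)=ba$, and deduce $\mu=\mu'$ by cancelling with the convolution inverse $(S\circ\mu')$. This is a clean, symmetric packaging that derives both implications at once from associativity of convolution; the paper's version is more elementary and makes visible exactly which Hopf-algebra axioms are used at each step. One small remark: the coalgebra-map property of $\mu$ and $\mu'$ follows already from $\Delta_A$ being an algebra map and the switch being a morphism of tensor-product coalgebras, so cocommutativity is not really what makes that step work (it is what makes $X\otimes Y$ the categorical product and $p$ a morphism in the category); this does not affect the validity of your proof.
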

\begin{proof}
$(a) \Leftrightarrow (b)$. First note that, in $\mathsf{Hopf}_{K,coc}$, if there is a $p$ making diagram \eqref{connector} commute then $p$ has to be defined as $p(a \otimes b) = ab$: indeed,
\begin{align*}
p(a \otimes b) &= p((a \otimes 1)(1 \otimes b)) \\ &= p(a \otimes 1) p (1 \otimes b) \\
&= ab.
\end{align*}  
Then note that, if $(b)$ holds, then $p$ is an algebra morphism (and it is always a coalgebra morphism by the cocommutativity assumption). 

On the other hand, when $(a)$ holds and $p$ is an algebra morphism, 
$$ ab = p((a \otimes 1)(1 \otimes b)) = p((1 \otimes b)(a \otimes 1)) = ba.$$
$(b) \Leftrightarrow (c)$ When condition $(b)$ holds, then we have the following identities: $$ a_1b_1S(a_2)S(b_2) = a_1S(a_2)b_1S(b_2) = \epsilon(a)\epsilon(b).$$ 
Conversely, $a_1b_1S(a_2)S(b_2) = \epsilon(a)\epsilon(b)$ for all $a \in X$ and $b \in Y$ implies that 
\begin{eqnarray*}
ab&=&a_1b\epsilon(a_2)=a_1bS(a_2)a_3=a_1b_1S(a_2)\epsilon(b_2)a_3 \\
&=&a_1b_1S(a_2)S(b_2)b_3a_3 = \epsilon(a_1)\epsilon(b_1)a_2b_2 = ba.
\end{eqnarray*}
 \end{proof}

In a general semi-abelian category, the Huq commutator of two normal subobjects $x \colon X\rightarrow A $ and $y \colon Y \rightarrow A$ is the smallest normal subobject $K \rightarrow A$ such that its cokernel $q \colon A \rightarrow A/K$ has the property that the images $q(X)$ and $q(Y)$ by $q$ commute in the quotient:\begin{center}
\begin{tikzpicture}[descr/.style={fill=white},scale=0.9]
\node (A) at (0,0) {$K$};
\node (B) at (2,0) {$A$};
\node (C) at (0.5,2) {${X \,}$ };
\node (D) at (3.5,2) {${\quad Y}$};
\node (E) at (8,0) {$A/K$};
\node (F) at (6,2) {$q{(X)}$};
\node (G) at (10,2) {$\quad q(Y)$};
\node (H) at (8,4) {$q{(X)} \times q{(Y)}$};
\path[>-stealth]
 (A.east) edge node[right] {$ $} (B.west) 
 (C.south) edge node[above] {$ $} (B.north west) 
 (D.south) edge node[above] {$ $} (B.north east)
    (F.south) edge node[above] {$ $} (E.north west)
   (G.south) edge node[above] {$ $} (E.north east); 
  \path[-stealth]
 (B.east) edge node[above] {$ q $} (E.west) 
  (F.north) edge node[left] {$(1,0) $} (H.south west)
   (G.north) edge node[right] {$(0,1) $} (H.south east);
   \path[-stealth,dashed]
 (H.south) edge node[right] {$p$} (E.north);
\end{tikzpicture}
\end{center}

In particular, in the category $\mathsf{Hopf}_{K,coc}$, the Huq commutator of two normal Hopf subalgebras $x \colon X\rightarrow A $ and $y \colon Y \rightarrow A$, denoted by $[X,Y]_{\mathsf{Huq}}$, is then defined as the smallest normal Hopf subalgebra in $A$ such that, in the quotient $A/A[X,Y]_{\mathsf{Huq}}^+$,
the normal Hopf subalgebras $q(X)$ and $q(Y)$
commute  in the sense of Huq, where $q:A\to A/A[X,Y]_{\mathsf{Huq}}^+$ is the canonical projection. By Lemma \ref{commutatorHopf} this is also equivalent to the condition  $$\overline{ab} = \overline{ba}$$ $\forall a \in X, b \in Y$ or, equivalently, to $$ab -ba \in A[X,Y]^+_{\mathsf{Huq}}.$$

We shall now give an explicit description of the Huq commutator of two normal Hopf subalgebras $X$ and $Y$ of $A$, and show that it coincides with the commutator defined in \cite{Yanagihara}, compare also to the commutator subalgebra defined in \cite{Burciu} and studied in \cite{Cohen}, \cite{Cohen2}.
We write $[X,Y]$ for the subalgebra of A generated by all elements of the form
$$\{a,b \} = a_1b_1S(a_2)S(b_2)$$
for any $a \in X$ and any $b \in Y$.

\begin{proposition}
The algebra $[X,Y]$ is a normal Hopf subalgebra of $A$.
\end{proposition}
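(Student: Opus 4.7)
The plan is to verify three properties of $[X,Y]$: that it is a sub-coalgebra, that it is stable under the adjoint action $c\cdot z = c_1 z S(c_2)$ of $A$ (which yields normality), and that it is stable under the antipode $S$. Because $\Delta$, multiplication, $S$, and the adjoint action all interact multiplicatively with products in the appropriate sense, in each case it suffices to verify the property on the generators $\{a,b\}$.

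For the coalgebra property, a direct Sweedler computation exploiting cocommutativity of $a$ and $b$ (which makes the iterated coproducts $\Delta^{(3)}(a)$ and $\Delta^{(3)}(b)$ fully symmetric under permutation of their factors) yields the clean formula $\Delta(\{a,b\}) = \{a_1,b_1\}\otimes\{a_2,b_2\}$. Since $X$ and $Y$ are sub-coalgebras, $a_i\in X$ and $b_i\in Y$, so each tensor factor is again a generator of $[X,Y]$.

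For stability under the adjoint action, the key identity is $c_1\{a,b\}S(c_2) = \{c_1 a S(c_2),\; c_3 b S(c_4)\}$, with the $2$-fold coproduct of $c$ on the left and the $4$-fold coproduct of $c$ on the right. It follows from the multiplicativity $c_1(xy)S(c_2) = c_1 x S(c_2)\cdot c_3 y S(c_4)$ of the adjoint action together with the identity $c_1 S(x) S(c_2) = S(c_1 x S(c_2))$ valid in cocommutative Hopf algebras. Since $X$ and $Y$ are normal (Corollary \ref{normal}), $c_1 a S(c_2)\in X$ and $c_3 b S(c_4)\in Y$, so the right hand side is already a generator of $[X,Y]$.

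For stability under $S$, cocommutativity together with $S^2=\mathrm{id}$ gives $S(\{a,b\}) = \{b,a\}$. The crucial ingredient is the Hopf-algebraic analogue of the group identity $[b,a] = b[a,b^{-1}]b^{-1}$: writing $\Delta^{(2)}(b) = b_1\otimes b_2\otimes b_3$, one has $\{b,a\} = b_1\,\{a, S(b_2)\}\,S(b_3)$, verified by expanding $\{a,S(b_2)\}$ and contracting using the antipode axiom on the resulting iterated coproduct of $b$. Swapping $b_2$ and $b_3$ by cocommutativity and reinterpreting the $3$-fold coproduct as $(\Delta\otimes\mathrm{id})\circ\Delta$, the right hand side becomes the usual $2$-split adjoint conjugate of $\{a, S(b^{(2)})\}$ by $b^{(1)}$, where $\Delta(b) = b^{(1)}\otimes b^{(2)}$. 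Since $Y$ is a Hopf subalgebra $S(b^{(2)})\in Y$, so $\{a,S(b^{(2)})\}$ is a generator of $[X,Y]$, and by the stability under the adjoint action established in the previous paragraph its conjugate lies in $[X,Y]$. Combining these three properties, $[X,Y]$ is a normal Hopf subalgebra of $A$. The main technical obstacle is the Sweedler bookkeeping in this last step, where the recursive appearance of $b$ inside $S(b_2)$ means the expression is not literally an adjoint action of a single element until cocommutativity is invoked to disentangle the three-fold coproduct.
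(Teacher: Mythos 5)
Your proof is correct, and for two of the three verifications it coincides with the paper's: the subcoalgebra computation $\Delta(\{a,b\})=\{a_1,b_1\}\otimes\{a_2,b_2\}$ via cocommutativity, and the stability under the adjoint action via the identity $c_1\{a,b\}S(c_2)=\{c_1aS(c_2),\,c_3bS(c_4)\}$ (obtained by inserting $S(c_i)c_{i+1}$ pairs and regrouping Sweedler legs by cocommutativity) together with the normality of $X$ and $Y$ in $A$. Where you genuinely depart from the paper is the antipode step. The paper computes $S(\{a,b\})=\{b,a\}$ using $S^2=\mathrm{id}$ and cocommutativity and then simply asserts that $\{b,a\}\in[X,Y]$; but $\{b,a\}$ is a generator of $[Y,X]$, not of $[X,Y]$, and since $[X,Y]$ is defined only as the \emph{subalgebra} generated by the $\{a,b\}$, closure under $S$ (which is what would identify $[Y,X]$ with $[X,Y]$, as in the group-theoretic $[y,x]=[x,y]^{-1}$) is exactly what is being proved, so the paper's argument is terse to the point of leaving a gap. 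You close that gap with the Hopf-algebraic version of $[b,a]=b\,[a,b^{-1}]\,b^{-1}$, namely $\{b,a\}=b_1\{a,S(b_2)\}S(b_3)$, which I have checked: expanding $\{a,S(b_2)\}$ gives $a_1S(b_2)S(a_2)b_3$ by cocommutativity and $S^2=\mathrm{id}$, and the trailing $b_3S(b_4)$ collapses by the antipode axiom to leave $b_1a_1S(b_2)S(a_2)=\{b,a\}$. Your observation that coassociativity and cocommutativity let you regroup $b_1\otimes b_2\otimes b_3$ as $(b^{(1)})_1\otimes b^{(2)}\otimes (b^{(1)})_2$, so that the expression becomes a genuine adjoint conjugate (by $b^{(1)}\in A$) of the genuine generator $\{a,S(b^{(2)})\}$ (with $S(b^{(2)})\in Y$), is the right way to resolve the entanglement you flag. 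The only structural consequence is that you must establish adjoint-stability before antipode-stability, which you do; this reordering costs nothing and makes the argument self-contained.
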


\begin{proof}
Thanks to the cocommutativity, $[X,Y]$ is a subcoalgebra and hence a subbialgebra  of $A$. Indeed,
\begin{align*}
\Delta(\{a,b\}) &= \Delta(a_1b_1S(a_2)S(b_2)) \\
& = a_1 b_1 S(a_4) S(b_4) \otimes a_2 b_2 S(a_3) S(b_3) \\
&= a_1b_1S(a_2)S(b_2) \otimes a_3b_3S(a_4)S(b_4) 
&\in [X,Y] \otimes [X,Y].
\end{align*}
Moreover, this subbialgebra is a Hopf subalgebra:
\begin{align*}
S(a_1b_1S(a_2)S(b_2)) &= b_2a_2S(b_1)S(a_1)
\\ &= b_1a_1S(b_2)S(a_2) \in [X,Y],
\end{align*}
where we used that $S^2=id$ since $A$ is cocommutative (see e.g.\ \cite[Proposition 4.0.1(6)]{Sweedler}).
Finally, given any $c$ in $A$, we check that the element $c_1a_1b_1S(a_2)S(b_2)S(c_2)$ belongs to $[X,Y]$ for any $a \in X$ and any $b \in Y$. Indeed, this is the case, since $X$ and $Y$ are normal Hopf subalgebras of $A$:
\begin{align*}
c_1a_1b_1S(a_2)& S(b_2)S(c_2) = c_1a_1S(c_2)c_3b_1S(c_4)c_5S(a_2) S(c_6) c_7 S(b_2)S(c_8) \\
&= (c_1aS(c_2))_1(c_3bS(c_4))_1S((c_1aS(c_2))_2)S((c_3bS(c_4))_2).
\end{align*}
\end{proof} 

\begin{proposition}\label{description}
Let $X,Y$ be two normal Hopf subalgebras of $A$. Then $$[X,Y] = [X,Y]_{\mathsf{Huq}}.$$
\end{proposition}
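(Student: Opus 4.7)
The plan is to prove equality by establishing the two inclusions separately, reducing each to the characterization of the Huq commutator recalled just above the statement.

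For the easy direction $[X,Y]_{\mathsf{Huq}} \subseteq [X,Y]$, I will use that $[X,Y]$ is already a normal Hopf subalgebra (by the preceding Proposition), together with the minimality property that defines $[X,Y]_{\mathsf{Huq}}$. It will be enough to check that $X$ and $Y$ commute modulo $A[X,Y]^+$. By Lemma~\ref{commutatorHopf}(c) this amounts to
$$\{a,b\} - \epsilon(a)\epsilon(b)1 \in A[X,Y]^+$$
for every $a \in X$ and $b \in Y$, which is immediate: $\{a,b\}$ lies in $[X,Y]$ by construction, and $\epsilon(\{a,b\}) = \epsilon(a)\epsilon(b)$, so $\{a,b\} - \epsilon(\{a,b\})1 \in [X,Y]^+ \subseteq A[X,Y]^+$.

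For the reverse inclusion $[X,Y] \subseteq [X,Y]_{\mathsf{Huq}}$, I will write $q \colon A \to A/A[X,Y]_{\mathsf{Huq}}^+$ for the canonical projection, so that $[X,Y]_{\mathsf{Huq}} = \HKer(q)$ by Corollary~\ref{normal}. Applying Lemma~\ref{commutatorHopf}(c) in the quotient, where $q(X)$ and $q(Y)$ commute by definition, yields $q(\{a,b\}) = \epsilon(a)\epsilon(b)1$. To promote this pointwise vanishing to the full kernel condition $(q \ot id)\Delta(\{a,b\}) = 1 \ot \{a,b\}$, the key input I will use is the coproduct formula
$$\Delta(\{a,b\}) = \{a_1, b_1\} \ot \{a_2, b_2\},$$
which, thanks to cocommutativity, is essentially the rearrangement already performed in the proof of the preceding Proposition when showing $\Delta(\{a,b\}) \in [X,Y] \ot [X,Y]$. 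Since $a_1 \in X$ and $b_1 \in Y$, the first tensor factor is itself a bracket, so $q(\{a_1,b_1\}) = \epsilon(a_1)\epsilon(b_1)1$, and the counit axiom then collapses $\epsilon(a_1)\epsilon(b_1)1 \ot \{a_2,b_2\}$ to $1 \ot \{a,b\}$. This gives $\{a,b\} \in \HKer(q) = [X,Y]_{\mathsf{Huq}}$; since the right-hand side is closed under the Hopf algebra operations and $[X,Y]$ is generated by such brackets, the inclusion follows.

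The only non-formulaic step, and therefore the sole potential obstacle, is the coproduct formula for $\{a,b\}$: this is where cocommutativity of $A$ enters essentially (one needs $S^2 = \mathrm{id}$ and to swap middle Sweedler factors), but it is available for free from the computation already carried out in the previous Proposition. Everything else is a direct translation between the Huq commutation condition of Lemma~\ref{commutatorHopf} and the Newman/kernel dictionary of Theorem~\ref{Newman} and Corollary~\ref{normal}.
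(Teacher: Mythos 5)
Your proposal is correct and follows essentially the same route as the paper: one inclusion via the minimality of $[X,Y]_{\mathsf{Huq}}$ after checking that $q(X)$ and $q(Y)$ commute in $A/A[X,Y]^+$ (you use form (c) of Lemma~\ref{commutatorHopf} where the paper uses form (b), an immaterial difference), and the other by the computation $(q\ot id)\Delta(\{a,b\}) = q(\{a_1,b_1\})\ot\{a_2,b_2\} = 1\ot\{a,b\}$, which is exactly the paper's calculation showing each generator lies in the relevant $\HKer$. The coproduct formula $\Delta(\{a,b\})=\{a_1,b_1\}\ot\{a_2,b_2\}$ you flag as the key step is indeed the computation already carried out in the preceding Proposition, so there is no gap.
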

\begin{proof}
 If $q \colon A \rightarrow A/A[X,Y]^+$ denotes the canonical quotient, as a first step we will prove that $q(X)$ and $q(Y)$ commute in $A/A[X,Y]^+$. As we have already observed, in $\mathsf{Hopf_{K,coc}}$, this is equivalent to the condition $$ab-ba \in A[X,Y]^+,$$ for any $a \in X$ and any $b \in Y$.
Equivalently, this can be expressed by asking that $$ ab - ba = a_1b_1\Big(\epsilon(a_2)\epsilon(b_2)1 - S(b_2)S(a_2)b_3a_3\Big) \in A [X,Y]^+,$$
that holds by definition of $[X,Y]^+$.

Next, we will show that $[X,Y]$ is the smallest normal Hopf subalgebra which satisfies the definition of commutators. In other words, we are going to prove that $[X,Y]$ factorizes through the categorical kernel of any surjective Hopf algebra morphism for which the images of $X$ and $Y$ commute. Let $f \colon A \rightarrow B$ be any surjective Hopf algebra morphism  such that $f(X)$ and $f(Y)$ commute in $B$. \\
For any generator $\{a,b\}$ of $[X,Y]$, we have:
\begin{eqnarray*}
f(a_1b_1S(a_2)S(b_2)) \otimes a_3b_3S(a_4)S(b_4)\\
&&\hspace{-4cm}=  f(a_1) f(b_1)f(S(a_2))f(S(b_2))) \otimes a_3b_3S(a_4)S(b_4) \\
&&\hspace{-4cm}=  f(a_1) f(S(a_2))f(b_1)f(S(b_2))) \otimes a_3b_3S(a_4)S(b_4) \\
&&\hspace{-4cm}= 1 \otimes a_1b_1S(a_2)S(b_2).
\end{eqnarray*} 
 In conclusion, the Huq commutator  $[X,Y]_{\mathsf{Huq}}$ of two normal Hopf subalgebras $X$ and $Y$ of $A$ is $[X,Y]$.
\end{proof}
\begin{remark}\label{Huq=Smith}
It is well known that in any action representable category the notion of centrality of two equivalence relations $R$ and $S$ in the sense of Smith  is equivalent to the notion of centrality of the corresponding normal subobjects $N_R$ and $N_S$ in the sense of Huq \cite{BJ}. Accordingly, by taking into account Proposition \ref{action-rep}, in the category $\mathsf{Hopf}_{K,coc}$ the description of the commutator given in Proposition \ref{description} also applies to the (normalization of the) Smith commutator.
 \end{remark}

\section{Internal crossed modules of Hopf algebras}

The abstract notion of internal crossed module was defined and investigated in the general context of semi-abelian categories by Janelidze \cite{Jan}. Internal crossed modules in a semi-abelian category $\cc$ form a category that is equivalent to the category of internal groupoids in $\cc$. 

In the context of Hopf algebras, the notion of \emph{Hopf crossed module} was defined independently by Fern\'andez Vilaboa, L\'opez L\'opez, and Villanueva Novoa \cite{Vilaboa} (see also \cite{Majid, JFM}). We will show here that in the cocommutative case, the category $\mathsf{HXMod_{K, coc}}$ of Hopf crossed modules is equivalent to the category of internal groupoids in $\Hopf_{K, coc}$ and is consequently also equivalent to the category of internal crossed modules in $\Hopf_{K, coc}$.

Recall that for a cocommutative Hopf algebra $B$, the category of left $B$-modules is symmetric monoidal and the forgetful functor to vector spaces is symmetric monoidal. Hence, one can consider a Hopf algebra in the category of left $B$-modules, which is then called a $B$-module Hopf algebra. Explicitly, a (cocommutative) $B$-module Hopf algebra $X$ is a (cocommutative) Hopf algebra $X$ endowed with a linear map 
$ \xi \colon B \otimes X \rightarrow X,\ \xi (b \otimes x)  = \;^bx$, satisfying the following identities

\begin{eqnarray*}
&(a)& \;^{(bb')}x = \;^{b}(\;^{b'}x) \\
&(b)& \;^{1_B}x=x \\
&(c)& \;^bxy = \;^{b_1}x\;^{b_2}y \\
&(d) &\;^b1_X = \epsilon(b)1_X \\
&(e)& (\;^bx)_1 \otimes (\;^bx)_2 = \;^{b_1}x_1 \otimes \;^{b_2}x_2\\ 
&(f)& \epsilon(\;^bx) = \epsilon(b)\epsilon(x)
\end{eqnarray*}
for any $b, b' \in B$, and $x,y \in X$.\\
Remark that a $B$-module Hopf algebra is in particular a $B$-module coalgebra as defined in Section \ref{preliminaries}. We recall Majid's definition of Hopf crossed module in the context of $\mathsf{Hopf}_{K,coc}$ which, in this case, also coincides with the definition given in \cite{Vilaboa}.

\begin{definition}\cite{Majid}\label{Majid}
In $\mathsf{Hopf_{K, coc}}$, a \emph{Hopf crossed module} is a triple $(B,X,d)$ where $B$ is a cocommutative Hopf algebra, $X$ is a cocommutative $B$-module Hopf algebra and $d : X \rightarrow B$ is a Hopf algebra morphism satisfying
\begin{eqnarray*}
d(\;^bx) &=& b_1d(x)S(b_2) \\
\;^{d(y)}x &=& y_1xS(y_2)
\end{eqnarray*}
for any $x,y \in X$ and $b \in B$.
\end{definition}
The second axiom $$^{d(y)}x = y_1xS(y_2),$$ is usually called the \emph{Peiffer identity}.
Let $\mathsf{HXMod_{K, coc}}$ be the category of Hopf crossed modules, where a morphism $$(\alpha, \beta) \colon (B,X,d) \rightarrow (B',X',d')$$ is a pair of Hopf algebra morphisms $\alpha \colon X \rightarrow X'$ and $\beta \colon B \rightarrow B'$ such that $d' \cdot \alpha = \beta \cdot d$ and $\alpha (^{b}x ) = \, ^{\beta(b)} \alpha (x)$.\\
\\
When $A$ is a $B$-module Hopf algebra, one can define the semi-direct product $A \rtimes B$ (which is, in the Hopf algebra context, usually called the \emph{smash product} and denoted by $A\#B$) as the vector space $A \otimes B$ with the following structure maps:
\begin{align*}
(a \otimes b) (a' \otimes b')&= a \;^{b_1}a' \otimes b_2 b',\\
u_{A \rtimes B} &= u_A \otimes u_B,\\
\Delta_{A \rtimes B} (a \otimes b) &= a_1 \otimes b_1 \otimes a_2 \otimes b_2,\\
\epsilon_{A \rtimes B}(a \otimes b) &= \epsilon(a)\epsilon(b),\\
S_{A \rtimes B}(a \otimes b) &= \;^{S_B(b_1)}S_A(a) \otimes S_B(b_2), 
\end{align*}  
for any $a,a' \in A$ and any $b,b' \in B$.\\
This semi-direct product induces a well known correspondence between split epimorphisms and actions (see \cite{Molnar}). For the reader's convenience we now recall this correspondence in detail, in the cocommutative case, since it will be useful later on:
\begin{proposition}\label{iso crossed product}
 Let
$\xymatrix{
 A  \ar[rr]_{\delta} && B \ar@<-1 ex>[ll]_{i}}$
 be a split epimorphism in $\Hopf_{K,coc} $, i.e. $\delta \cdot i = Id_{B}$. Then, $\HKer(\delta)$ is a $B$-module Hopf algebra for the action $\xi \colon B \otimes \HKer(\delta) \rightarrow \HKer(\delta)$ defined by
$$\xi (b \otimes k) = i(b_1)ki(S(b_2)).$$  Moreover, there exists a natural isomorphism $$\HKer(\delta) \rtimes B \cong A.$$ 
\end{proposition}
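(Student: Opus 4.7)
The plan is to execute the classical reconstruction for semi-direct products. First I would show that the formula $\xi(b \otimes k) = i(b_1) k i(S(b_2))$ sends $B \otimes \HKer(\delta)$ into $\HKer(\delta)$ and endows the latter with a $B$-module Hopf algebra structure; then I would define mutually inverse Hopf algebra morphisms $\phi$ and $\psi$ between $\HKer(\delta) \rtimes B$ and $A$.

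For the first step, I would apply $\Delta$ to $\xi(b \otimes k)$, then $\delta$ to the first leg, and exploit the characterisation $\delta(k_1) \otimes k_2 = 1_B \otimes k$ of elements in $\HKer(\delta)$ together with $\delta \cdot i = \mathrm{id}_B$, cocommutativity of $B$, and the antipode identity $b_1 S(b_2) = \epsilon(b) 1_B$. A short Sweedler computation then yields $\delta(\xi(b \otimes k)_1) \otimes \xi(b \otimes k)_2 = 1_B \otimes \xi(b \otimes k)$. Each of the six axioms (a)--(f) of a $B$-module Hopf algebra is a routine Sweedler verification using the Hopf algebra structure on $B$ and the fact that $i$ is a Hopf algebra morphism; the only one needing any care is (c), which uses the telescoping identity $S(b_2) b_3 = \epsilon(b_2) 1_B$.

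Next I would set $\phi(k \otimes b) := k \cdot i(b)$, with the product taken in $A$, and $\psi(a) := a_1 \cdot i(S(\delta(a_2))) \otimes \delta(a_3)$. That $\psi$ lands in $\HKer(\delta) \otimes B$ and that $\phi$ respects the smash product multiplication are both direct Sweedler checks using $\delta \cdot i = \mathrm{id}_B$ and cocommutativity; in particular the multiplicativity of $\phi$ boils down to the identity $i(S(b_2) b_3) = \epsilon(b_2) 1_A$. The composite $\phi \cdot \psi$ equals the identity almost immediately, since $\phi(\psi(a)) = a_1 \cdot i(S(\delta(a_2)) \delta(a_3)) = a_1 \epsilon(a_2) 1_A = a$ by the antipode identity in $B$. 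The main obstacle is the opposite composite $\psi \cdot \phi = \mathrm{id}$: one must repeatedly apply the extended kernel identity $\delta(k_1) \otimes k_2 \otimes k_3 = 1_B \otimes k_1 \otimes k_2$ inside a triply indexed Sweedler expression, using cocommutativity of $A$ to align the $k$-indices and $\delta \cdot i = \mathrm{id}_B$ to collapse the $i \cdot S \cdot \delta$ factors. Naturality of the resulting isomorphism in the split extension is then automatic from the functoriality of $\phi$ and $\psi$ in the data $(\delta, i)$.
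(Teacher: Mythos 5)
Your proposal is correct and follows essentially the same route as the paper: the paper defines exactly the same maps $\phi(k\otimes b)=ki(b)$ and $\psi(a)=a_1(i\cdot\delta)(S(a_2))\otimes\delta(a_3)$ (identical to your $a_1\,i(S(\delta(a_2)))\otimes\delta(a_3)$, since $i$ and $\delta$ preserve antipodes) and, like you, leaves the Sweedler verifications as routine checks. Your outline of those checks, including the use of the kernel characterisation $\delta(k_1)\otimes k_2=1_B\otimes k$ and cocommutativity, is the intended argument.
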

\begin{proof}

Using the explicit description of the kernel of a morphism in $\Hopf_{K,coc} $ recalled above, one can check that $\xi (b \otimes k)\in \HKer\delta$ for all $b\ot k\in B\ot \HKer\delta$. Furthermore it is easy to verify that $\xi$ defines a $B$-module Hopf algebra structure on $\HKer(\delta)$.
%On one hand, thanks to the cocommutativity of $B$, the map $\xi (b \otimes k) = i(b_1)ki(S(b_2))$ is well-defined, and it gives a  
On the other hand, one can check that the maps $\phi  \colon \HKer(\delta) \rtimes B \rightarrow A$ and $\psi \colon A \rightarrow \HKer(\delta) \rtimes B$ defined by 
$
\phi (k \otimes b) = ki(b)$ and $
  \psi (a) = a_1(i \cdot \delta)(S(a_2)) \otimes \delta(a_3)$
 are well-defined, and they are Hopf algebra morphisms.
 Moreover, these morphisms are mutual inverses.
 % indeed, \begin{align*}
%(\phi \cdot \psi)(a) &= \phi (a_1(i \cdot \delta)(S(a_2)) \otimes \delta(a_3)) \\
%&= a_1(i \cdot \delta)(S(a_2))(i\cdot \delta)(a_3)\\
%&= a_1 \epsilon(a_2)\\
%&= a,
%\end{align*}
%and
%\begin{align*}
%(\psi \cdot \phi)(k \otimes x) &= \psi(ki(x))\\
%&= k_1i(x_1)(i \cdot \delta)(S(k_2i(x_2))) \otimes \delta(k_3i(x_3))\\
%&= k_1i(x_1S(x_2))(i \cdot S)(\delta(k_2)) \otimes \delta(k_3) x_3\\
%&= k_1(i \cdot S)(\delta(k_2)) \otimes \delta(k_3) x\\
%&= k \otimes x,
%\end{align*}
%where the last equality comes from the fact that $k$ is an element of $HKer(\delta)$, i.e. $k_1 \otimes \delta(k_2) = k \otimes 1_B $, so that by applying $Id \otimes \Delta $ to it one gets $k_1 \otimes \delta(k_2) \otimes \delta(k_3) = k \otimes 1_B \otimes 1_B$.
\end{proof}
 %%%
  %%% %
  %%% 
  
 Recall from \cite{CPP} that a \emph{reflexive-multiplicative graph} $\mathbb A$
 is a diagram \begin{equation}\label{graph}
\xymatrix{
A_1\times_{A_0}A_1 \ar[r]^-{m} & A_1 \ar@<1.3 ex>[rr]^{\delta} \ar@<-2.2 ex>[rr]_{\gamma} && A_0, \ar@<0.7 ex>[ll]_{i}}
\end{equation}
where $\delta$ is the ``domain morphism'', $\gamma$ the ``codomain morphism'', $i$ the ``identity morphism'' (so that $\delta \cdot i = \gamma \cdot i = Id_{A_0}$), $A_1\times_{A_0}A_1$ is the (object part of the) pullback
$$
\xymatrix{A_1\times_{A_0} A_1 \ar[d]_{p_1} \ar[r]^-{p_2} & A_1 \ar[d]^{ \gamma} \\
A_1 \ar[r]_-{\delta}& A_0
}
$$ and $m$ a multiplication that is required to satisfy the identities
\begin{equation}\label{RGM}
m \cdot (Id_{A_1} , i \cdot \delta ) = Id_{A_1} = m \cdot (i \cdot \gamma, Id_{A_1} ),
\end{equation} 
 where $(Id_{A_1} , i \cdot \delta ) : A_1 \rightarrow A_1\times_{A_0}A_1 $ and $(i \cdot \gamma, Id_{A_1} ) : A_1 \rightarrow A_1\times_{A_0}A_1 $ are induced by universal property of the pullback $A_1\times_{A_0}A_1$.
 
A reflexive-multiplicative graph is an \emph{internal groupoid} if the multiplication $m$ satisfies the additional identities
 \begin{equation*}
 \delta \cdot m = \delta \cdot {p_2},
 \end{equation*}
 \begin{equation*}
 \gamma \cdot m = \gamma \cdot {p_1},
 \end{equation*}
 \begin{equation*}
 m \cdot (1,m) = m \cdot (m,1),
 \end{equation*}
 and there exists a morphism $\iota \colon A_1 \rightarrow A_1$ such that \begin{equation*}
 \delta  \cdot \iota = \gamma,
 \end{equation*}
 \begin{equation*}
 \gamma \cdot \iota = \delta,
 \end{equation*}
 \begin{equation*}
 m \cdot (\iota , Id_{A_1}) = i \cdot {\delta},
 \end{equation*}
 \begin{equation*}
 m \cdot (Id_{A_1}, \iota) = i \cdot {\gamma}.
 \end{equation*}

 Reflexive-multiplicative graphs, with morphisms of reflexive graphs that preserve the multiplication, form a category, that will be denoted by $\mathsf{RMG}({\mathbb C})$. The category of internal groupoids will be denoted by $\mathsf{Grpd}(\mathbb{C})$. 
 
Recall that a finitely complete category $\mathbb{C}$ is a \emph{Mal'tsev} category when any internal reflexive relation in $\mathbb{C}$ is an equivalence relation. It is well known that any protomodular category (therefore in particular any semi-abelian category) is a Mal'tsev category (see \cite{BB}, for instance).
As shown in \cite{CPP}, when $\mathbb C$ is a Mal'tsev category, morphisms in $\mathsf{RMG}({\mathbb C})$ or in $\mathsf{Grpd}(\mathbb{C})$ are simply morphisms of reflexive graphs. 

\begin{remark}\label{remark-groupoid}
 It is well known \cite{CPP} that for a reflexive graph \begin{equation}\label{reflexive.graph}
\xymatrix{
 A_1 \ar@<1.3 ex>[rr]^{\delta} \ar@<-2.2 ex>[rr]_{\gamma} && A_0, \ar@<0.7 ex>[ll]_{i}}\end{equation}
 it is equivalent to be a reflexive multiplicative graph, an internal groupoid
or to have a double centralizing equivalence relation on the kernel pairs $\mathsf{Eq}(\delta)$ and $\mathsf{Eq}(\gamma)$ of $\delta$ and $\gamma$, respectively.
\end{remark}

In our context, as recalled in Remark \ref{Huq=Smith}, this condition of centralization for equivalence relations exactly corresponds to the commutation of the normal subobjects associated with $\mathsf{Eq}(\delta)$ and $\mathsf{Eq}(\gamma)$, i.e. $[\HKer(\delta),\HKer(\gamma)]_{\mathsf{Huq}} =0$.\\

We recall that a \emph{${cat}^1$-Hopf algebra}, in the sense of \cite{Vilaboa}, is a reflexive graph  \eqref{reflexive.graph} such that \begin{equation}
ab = ba, 
\end{equation}
  $\forall a \in \HKer(\delta)$ and $\forall b \in \HKer(\gamma)$. \\  We write $\mathsf{Cat^1}(\Hopf_{K, coc})$ for the category whose objects are ${cat}^1$- Hopf algebras and whose morphisms are the ones of reflexive graphs. \\

  In the context of cocommutative Hopf algebras, Remark \ref{remark-groupoid} leads us to the following proposition: 

\begin{proposition}\label{equivalence-cat}
In $\mathsf{Hopf}_{K,coc}$, for a reflexive graph \eqref{reflexive.graph} the following conditions are equivalent: 
 \begin{itemize}
  \item[(a)]  \eqref{reflexive.graph} is a reflexive-multiplicative graph;
 \item[(b)]  \eqref{reflexive.graph} is an internal groupoid;
 \item[(c)]   \eqref{reflexive.graph} satisfies $[\HKer(\delta),\HKer(\gamma)]_{\mathsf{Huq}} =0$;
 \item[(d)]  \eqref{reflexive.graph} is a ${cat}^1$-Hopf algebra.
 \end{itemize}
 This implies that the categories $\mathsf{RMG}(\Hopf_{K, coc})$, $\mathsf{Grpd}(\Hopf_{K, coc})$ and $\mathsf{Cat^1}(\Hopf_{K, coc})$ are isomorphic.
\end{proposition}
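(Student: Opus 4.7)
The plan is to verify the four equivalences by splitting them into the block $(a)\Leftrightarrow(b)\Leftrightarrow(c)$, which is essentially formal, and the block $(c)\Leftrightarrow(d)$, which unpacks the vanishing of the Huq commutator through the explicit description obtained earlier.

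First, I would appeal to Remark \ref{remark-groupoid}: for a reflexive graph in any Mal'tsev category, being a reflexive-multiplicative graph, being an internal groupoid, and admitting a double centralizing pair of equivalence relations on $\mathsf{Eq}(\delta)$ and $\mathsf{Eq}(\gamma)$ are equivalent. Since $\Hopf_{K,coc}$ is semi-abelian by Theorem \ref{semi-abelian} and every semi-abelian category is protomodular hence Mal'tsev, this remark applies directly. Moreover, since $\Hopf_{K,coc}$ is action representable by Proposition \ref{action-rep}, Remark \ref{Huq=Smith} identifies the Smith-centralization of $\mathsf{Eq}(\delta)$ and $\mathsf{Eq}(\gamma)$ with the Huq-commutation of their normalizations, which are precisely $\HKer(\delta)$ and $\HKer(\gamma)$. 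Combining these two observations yields $(a)\Leftrightarrow(b)\Leftrightarrow(c)$.

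For $(c)\Leftrightarrow(d)$ I would argue as follows. Saying that $[\HKer(\delta),\HKer(\gamma)]_{\mathsf{Huq}}=0$ means, by definition of the Huq commutator, that the normal Hopf subalgebras $\HKer(\delta)$ and $\HKer(\gamma)$ commute in the sense of Huq inside $A_1$ (no passage to a nontrivial quotient being necessary). By the equivalence $(a)\Leftrightarrow(b)$ in Lemma \ref{commutatorHopf} applied with $X=\HKer(\delta)$ and $Y=\HKer(\gamma)$, this is in turn equivalent to the identity $ab=ba$ for every $a\in\HKer(\delta)$ and $b\in\HKer(\gamma)$, which is exactly the defining condition of a $\mathsf{cat}^1$-Hopf algebra. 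Alternatively, one may use Proposition \ref{description} directly: $[\HKer(\delta),\HKer(\gamma)]=K$ forces every generator $\{a,b\}=a_1b_1S(a_2)S(b_2)$ to be a scalar, and applying the counit identifies that scalar with $\epsilon(a)\epsilon(b)$, which is condition (c) of Lemma \ref{commutatorHopf} and hence equivalent to commutativity of the elements of $\HKer(\delta)$ and $\HKer(\gamma)$.

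For the last claim about the three categories being isomorphic, the objects coincide by the equivalences $(a)\Leftrightarrow(b)\Leftrightarrow(d)$ just established. For the morphisms, those of $\mathsf{Cat^1}(\Hopf_{K,coc})$ are morphisms of reflexive graphs by definition, while those of $\mathsf{RMG}(\Hopf_{K,coc})$ and $\mathsf{Grpd}(\Hopf_{K,coc})$ reduce to morphisms of reflexive graphs thanks to the Mal'tsev property of $\Hopf_{K,coc}$, as recalled in the paragraph preceding Remark \ref{remark-groupoid}. Hence the three categories have the same objects and the same morphisms, and the identity on underlying reflexive graphs gives the desired isomorphisms. I do not expect any genuine obstacle here; the only delicate point is to invoke Remark \ref{remark-groupoid} with care, making sure that the centralization condition is transported from the kernel pairs to the normal subobjects $\HKer(\delta)$ and $\HKer(\gamma)$ via action representability.
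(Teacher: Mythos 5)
Your proposal is correct and follows essentially the same route as the paper: the paper's (very terse) proof likewise combines Theorem \ref{semi-abelian}, Remark \ref{remark-groupoid}, the identification of Smith-centrality of the kernel pairs with Huq-commutation of $\HKer(\delta)$ and $\HKer(\gamma)$ via Remark \ref{Huq=Smith}, and the explicit description of the commutator from Proposition \ref{description} (together with Lemma \ref{commutatorHopf}) to reach the $\mathsf{cat}^1$ condition $ab=ba$. Your write-up merely makes explicit the steps the paper leaves implicit, including the reduction of morphisms to reflexive-graph morphisms for the final isomorphism of categories.
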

\begin{proof}
Since ${\mathbb C} = \Hopf_{K, coc}$ is a semi-abelian category (Theorem \ref{semi-abelian}), thus in particular a Mal'tsev category, this proposition follows from Remark \ref{remark-groupoid} and the description of the Huq commutator given in Proposition \ref{description}.
\end{proof}

We will now show that the categorical notion of crossed module in the category of cocommutative Hopf algebras corresponds exactly with the notion of Hopf crossed module, by using the fact that the both categories are equivalent to $\mathsf{Grpd}(\Hopf_{K, coc})$.
The next theorem could be derived from Theorem $14$ in \cite{Vilaboa}, but we prefer to give a sketch of the proof for sake of completeness.

\begin{proposition}\label{equivalenceMajid}
The categories $\mathsf{HXMod_{K, coc}}$ and $\mathsf{Grpd}(\Hopf_{K, coc})$ are equivalent.
\end{proposition}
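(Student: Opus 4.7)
The plan is to construct explicit functors $\Gamma \colon \mathsf{Grpd}(\Hopf_{K, coc}) \to \mathsf{HXMod_{K, coc}}$ and $\Xi \colon \mathsf{HXMod_{K, coc}} \to \mathsf{Grpd}(\Hopf_{K, coc})$ and to verify that they form a quasi-inverse pair. This is the classical Brown--Spencer style argument, adapted to the semi-abelian setting of cocommutative Hopf algebras. Two tools already at hand will do almost all the work: Proposition \ref{iso crossed product}, which identifies every split epimorphism in $\Hopf_{K, coc}$ with the canonical semi-direct product projection $\HKer(\delta) \rtimes A_0 \to A_0$ under the conjugation action, and Proposition \ref{equivalence-cat}, which asserts that a reflexive graph in $\Hopf_{K, coc}$ is an internal groupoid if and only if $[\HKer(\delta), \HKer(\gamma)]_{\mathsf{Huq}} = 0$.

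For $\Gamma$, starting from an internal groupoid with domain $\delta$, codomain $\gamma$ and identity $i$, I set $B := A_0$, $X := \HKer(\delta)$, and take $d \colon X \to B$ to be the restriction of $\gamma$ to $X$. By Proposition \ref{iso crossed product}, the formula ${}^b x := i(b_1) \cdot x \cdot i(S(b_2))$ endows $X$ with the structure of a cocommutative $B$-module Hopf algebra. The equivariance $d({}^b x) = b_1 d(x) S(b_2)$ is immediate from $\gamma \cdot i = \mathrm{id}_{A_0}$. For the Peiffer identity ${}^{d(y)} x = y_1 x S(y_2)$, the idea is to write $y = z \cdot i(\gamma(y))$, where $z := y_1 \cdot i(S(\gamma(y_2)))$ lies in $\HKer(\gamma)$; the identity then reduces to the Huq-commutation of $\HKer(\delta)$ and $\HKer(\gamma)$, which by Proposition \ref{description} is precisely the relation $z \cdot x = x \cdot z$ required.

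For $\Xi$, given a Hopf crossed module $(B, X, d)$, I form $A_0 := B$, $A_1 := X \rtimes B$ with the smash product structure, and set
$$\delta(x \otimes b) = \epsilon(x) b, \qquad \gamma(x \otimes b) = d(x) b, \qquad i(b) = 1_X \otimes b.$$
A direct verification shows that $\delta$, $\gamma$ and $i$ are Hopf algebra morphisms (the multiplicativity of $\gamma$ uses precisely the equivariance axiom of $d$) and that $\delta \cdot i = \gamma \cdot i = \mathrm{id}_B$. By Proposition \ref{equivalence-cat} it then remains to verify $[\HKer(\delta), \HKer(\gamma)]_{\mathsf{Huq}} = 0$; since $\HKer(\delta) = X \otimes 1_B$, one identifies $\HKer(\gamma)$ explicitly inside $X \rtimes B$, and the required commutation becomes, through Proposition \ref{description}, a direct reformulation of the Peiffer identity.

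The main obstacle is not these constructions but verifying that $\Gamma$ and $\Xi$ form a quasi-inverse pair. The natural isomorphism $\Xi \Gamma(\mathbb{A}) \cong \mathbb{A}$ comes from the canonical decomposition $\HKer(\delta) \rtimes A_0 \cong A_1$ of Proposition \ref{iso crossed product}, and one must check carefully that it intertwines the whole groupoid data $(\delta, \gamma, i)$ on both sides. The natural isomorphism $\Gamma \Xi (B, X, d) \cong (B, X, d)$ amounts to identifying, inside $X \rtimes B$, the kernel of the first projection together with its conjugation $B$-action with $X$ endowed with the original action, which is a direct unwinding of the smash product formulas. Once functoriality and naturality are established, the equivalence of categories follows.
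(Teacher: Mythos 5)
Your proposal is correct and follows essentially the same route as the paper: both reduce via Proposition \ref{equivalence-cat}, take $X=\HKer(\delta)$ with the conjugation action from Proposition \ref{iso crossed product} and $d$ the restriction of $\gamma$, derive the Peiffer identity from the Huq-commutation of $\HKer(\delta)$ and $\HKer(\gamma)$, and invert via the smash product reflexive graph. The only cosmetic difference is that in the converse direction you verify condition (c) of Proposition \ref{equivalence-cat} (vanishing of the commutator) where the paper exhibits the multiplication $m$ directly (condition (a)); these are interchangeable by that proposition.
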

\begin{proof}
Thanks to the Proposition above, it will suffice to prove that $\mathsf{RMG}(\Hopf_{K, coc})$ and $\mathsf{HXMod_{K, coc}}$ are equivalent. For brevity, we shall mainly recall the definition on objects of the functors yielding this equivalence. With a reflexive multiplicative graph \eqref{graph} one associates the Hopf algebra morphism $$ d = \gamma \cdot {hker} (\delta) \colon \HKer (\delta) \rightarrow A_0$$ 
(where ${hker} (\delta) \colon \HKer(\delta) \rightarrow A_1$ is the kernel of $\delta$), equipped with the action $A_0 \otimes \HKer(\delta) \rightarrow \HKer(\delta)$ defined by 
$$^a  k = i(a_1) k i(S(a_2)), $$
for any $a \in A_0$ and $k \in \HKer(\delta)$. %It is not difficult to check that in this way $\HKer (\delta)$ becomes an $A_0$-module Hopf algebra. 
Thanks to Proposition \ref{iso crossed product}, $\HKer(\delta)$ is an $A_0$-module Hopf algebra. The morphism $d = \gamma \cdot {hker} (\delta)$ is a Hopf algebra morphism by construction, so it remains to see that the two axioms in the Definition \ref{Majid} of Hopf crossed module are satisfied. 
The first axiom holds, since
\begin{eqnarray*}
 d(\;^ak)& = & [\gamma \cdot {hker} (\delta) ] (i(a_1) k i(S(a_2)))\\
 & =&  (\gamma \cdot i) (a_1)  (\gamma \cdot hker(\delta)) (k) (\gamma \cdot i) (S(a_2)) \\
  & =&  a_1 d (k) S(a_2).
\end{eqnarray*}
By Proposition \ref{equivalence-cat}, the elements of $\HKer(\delta)$ and $\HKer(\gamma)$ commute, hence for all $b,k \in \HKer(\delta)$, \begin{equation}\label{commutative} (S(k_1) (i \cdot \gamma)(k_2))b = b ((S(k_1) (i \cdot \gamma)(k_2))\end{equation} since $(S(k_1) (i \cdot \gamma)(k_2)) \in \HKer(\gamma)$, as it follows from
\begin{align*}
\gamma(S(k_1)(i\cdot \gamma)(k_2)) \otimes S(k_3)(i\cdot \gamma)(k_4) &= \gamma(S(k_1))\gamma(k_2) \otimes S(k_3)(i\cdot \gamma)(k_4)\\
&= 1_B \otimes S(k_1)(i \cdot \gamma)(k_2).
\end{align*}
%Accordingly, the equality \eqref{commutative} implies 
%$$ (S(k_1) (i \cdot \gamma )(k_2))b\epsilon(k_3) = \epsilon(k_1)b S(k_2) (i \cdot \gamma)(k_3)$$
%and then 
%$$S(k_1) (i \cdot \gamma)(k_2)b (i \cdot \gamma)(S(k_{3_1}))(i \cdot \gamma)(k_{3_2}) =  S(k_{1_1})k_{1_2}bS(k_2) (i \cdot \gamma)(k_3).$$
%(by \; applying \; \epsilon\otimes Id \otimes Id \otimes \epsilon) 
Accordingly, the equality \eqref{commutative} implies 
\begin{align*}
^{d(k)}b &=(i \cdot \gamma)(k_1))b (i \cdot \gamma)(S(k_{2}))\\
&= k_1S(k_2)(i \cdot \gamma)(k_3))b (i \cdot \gamma)(S(k_{4}))\\
&=  k_1b S(k_2)(i \cdot \gamma)(k_3))(i \cdot \gamma)(S(k_{4}))\\
&= k_{1}bS(k_2)
\end{align*}
%$$^{d(k)}b=(i \cdot \gamma)(k_1))b (i \cdot \gamma)(S(k_{2})) =   k_{1}b(S(k_2),$$
showing that the \textit{Peiffer identity} holds and $d \colon \HKer(\delta) \rightarrow A_0$ is indeed a Hopf crossed module in the sense of \cite{Majid}.\\
\\
Conversely, given a Hopf crossed module $d : X \rightarrow B$, we define the reflexive graph
\begin{equation}\label{reflexive}
\xymatrix{
 X \rtimes B \ar@<1.3 ex>[rr]^{{p_2}} \ar@<-2.3 ex>[rr]_{{p_1}} && B, \ar@<0.7 ex>[ll]_{e}}
 \end{equation}
% where, as coalgebra, $X \rtimes B = X \otimes B$, the product of two elements $x \otimes b$ and $x' \otimes b'$ in $X \rtimes B$ is defined by $$(x \otimes b) (x' \otimes b')= x ^{b_1}x' \otimes b_2 b',$$ and the antipode of $X \rtimes B$ is given by $$S(x \otimes b) = ^{S_B(b_1)}S_X(x) \otimes S_B(b_2).$$ 
The morphism $p_2$ is the second projection $p_2 (x \otimes b) = \epsilon (x) b$, the morphisms $p_1$ and $e$ are defined by $p_1(x \otimes b)= d(x)b$ and $e(b) = 1_X \otimes b$. This reflexive graph is equipped with a groupoid multiplication by setting $$m(x \otimes b, x' \otimes b') = xx' \otimes \epsilon (b) b',$$
 for any $(x \otimes b, x' \otimes b')$ in the pullback $P$ defined by 
 $$\xymatrix{P \ar[r]^-{\pi_2} \ar[d]_{\pi_1} &  X \rtimes B \ar[d]^{p_1} \\
 X \rtimes B \ar[r]_-{p_2} & B.} $$
 We leave the verification of the fact that $m$ gives a reflexive-multiplicative graph structure on \eqref{reflexive} to the reader. We observe that the Peiffer identity is essential to prove that the map $m \colon P \rightarrow X \rtimes B$ is an algebra morphism, and is then a morphism in $\Hopf_{K, coc}$.\\
\\
The correspondence described above naturally extends to morphisms, yielding two functors \begin{align*}
F &: \mathsf{RMG}(\Hopf_{K, coc}) \rightarrow \mathsf{HXMod_{K, coc}} ,\\
G &: \mathsf{HXMod_{K, coc}} \rightarrow \mathsf{RMG}(\Hopf_{K, coc}).
\end{align*}
These functors give rise to an equivalence of categories. \end{proof}

\begin{remark}
The previous result can also be deduced from the recent results of B\" ohm (see Proposition $3.13$ in \cite{Bohm}, where a Hopf monoid in $\mathsf{Vect}$ is precisely a Hopf algebra).
\end{remark}
Let us write $\mathsf{XMod}( \Hopf_{K, coc})$ for the category of internal crossed modules in $\Hopf_{K, coc}$ in the sense of Janelidze \cite{Jan}.
\begin{corollary}\label{cor2}
The categories $\mathsf{HXMod_{K, coc}}$ and $\mathsf{XMod}( \Hopf_{K, coc})$ are equivalent.
\end{corollary}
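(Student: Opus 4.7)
The plan is to chain together two equivalences. On the one hand, Proposition \ref{equivalenceMajid} already establishes an equivalence
\[
\mathsf{HXMod_{K, coc}} \simeq \mathsf{Grpd}(\Hopf_{K, coc}).
\]
On the other hand, the main theorem of Janelidze \cite{Jan} asserts that for \emph{any} semi-abelian category $\mathbb{C}$, the category $\mathsf{XMod}(\mathbb{C})$ of internal crossed modules in $\mathbb{C}$ is equivalent to the category $\mathsf{Grpd}(\mathbb{C})$ of internal groupoids in $\mathbb{C}$. Since we proved in Theorem \ref{semi-abelian} that $\Hopf_{K, coc}$ is semi-abelian, Janelidze's result applies to $\mathbb{C} = \Hopf_{K, coc}$ and yields
\[
\mathsf{XMod}(\Hopf_{K, coc}) \simeq \mathsf{Grpd}(\Hopf_{K, coc}).
\]
Composing these two equivalences gives exactly the desired equivalence
\[
\mathsf{HXMod_{K, coc}} \simeq \mathsf{XMod}(\Hopf_{K, coc}).
\]

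There is essentially no obstacle here: the corollary is a formal consequence of Proposition \ref{equivalenceMajid} together with the fact, established in Theorem \ref{semi-abelian}, that we are working inside a semi-abelian category, which is precisely the hypothesis needed to invoke Janelidze's equivalence. The only substantive work — the explicit translation between the Hopf-theoretic axioms of Definition \ref{Majid} (the action condition $d({}^b x)=b_1 d(x) S(b_2)$ and the Peiffer identity ${}^{d(y)}x = y_1 x S(y_2)$) and the data of an internal reflexive-multiplicative graph in $\Hopf_{K,coc}$ — has already been carried out in the proof of Proposition \ref{equivalenceMajid}.
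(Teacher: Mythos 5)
Your argument is exactly the paper's: both proofs compose the equivalence $\mathsf{HXMod_{K, coc}} \simeq \mathsf{Grpd}(\Hopf_{K, coc})$ of Proposition \ref{equivalenceMajid} with Janelidze's equivalence $\mathsf{XMod}(\Hopf_{K, coc}) \simeq \mathsf{Grpd}(\Hopf_{K, coc})$, which applies because Theorem \ref{semi-abelian} shows $\Hopf_{K, coc}$ is semi-abelian. The proposal is correct and takes essentially the same approach as the paper.
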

\begin{proof}
In any semi-abelian category the equivalence between internal groupoids and internal crossed modules was established in \cite{Jan}. The result then follows from Theorem \ref{semi-abelian} and Proposition \ref{equivalenceMajid}, since both the categories $\mathsf{HXMod_{K, coc}}$ and $\mathsf{XMod}( \Hopf_{K, coc})$ are equivalent to the category $\mathsf{Grpd}(\Hopf_{K, coc})$.
\end{proof}

\begin{corollary}
The categories $\mathsf{HXMod_{K, coc}}$ and $\mathsf{XMod}( \Hopf_{K, coc})$ are semi-abelian.
\end{corollary}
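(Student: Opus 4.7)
The plan is to deduce the corollary from two inputs already available in the paper: the semi-abelianness of $\Hopf_{K,coc}$ (Theorem \ref{semi-abelian}) and the equivalence $\mathsf{HXMod_{K,coc}} \simeq \mathsf{XMod}(\Hopf_{K,coc})$ (Corollary \ref{cor2}). Since being semi-abelian is an intrinsic categorical property (pointedness, finite (co)completeness, Barr-exactness, and protomodularity are all preserved under equivalence of categories), it suffices to establish the claim for one of the two categories, say $\mathsf{XMod}(\Hopf_{K,coc})$.

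For that category, I would invoke the general fact that for \emph{any} semi-abelian category $\mathcal{C}$, the category $\mathsf{XMod}(\mathcal{C})$ of internal crossed modules in $\mathcal{C}$ is again semi-abelian. This is a well-known consequence of Janelidze's equivalence $\mathsf{XMod}(\mathcal{C}) \simeq \mathsf{Grpd}(\mathcal{C})$ from \cite{Jan}, combined with the observation that $\mathsf{Grpd}(\mathcal{C})$ inherits the semi-abelian structure of $\mathcal{C}$: it is pointed (the discrete groupoid on the zero object), finitely complete and cocomplete (computed pointwise on the underlying reflexive graph, using the Mal'tsev property to ensure that colimits of groupoids are again groupoids), regular and Barr-exact (since these properties transfer to $\mathsf{Grpd}(\mathcal{C})$ when $\mathcal{C}$ is exact Mal'tsev), and protomodular (since kernels and split short five lemma can be checked levelwise). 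Applying this with $\mathcal{C} = \Hopf_{K,coc}$ gives that $\mathsf{XMod}(\Hopf_{K,coc})$ is semi-abelian, and then Corollary \ref{cor2} transports the conclusion to $\mathsf{HXMod_{K,coc}}$.

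The main (and only non-formal) obstacle lies in pinpointing a clean reference for the preservation of semi-abelianness from $\mathcal{C}$ to $\mathsf{XMod}(\mathcal{C})$ (equivalently to $\mathsf{Grpd}(\mathcal{C})$). If the chosen reference treats instead the case of $\mathsf{Grpd}(\mathcal{C})$ or of internal categories in $\mathcal{C}$, one bridges the gap via Corollary \ref{cor2} together with Proposition \ref{equivalenceMajid}. No further calculation should be needed, as the whole argument is a transfer of structure along the two equivalences already proved.
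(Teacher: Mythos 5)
Your proposal is correct and follows essentially the same route as the paper: the paper likewise deduces the result from Theorem \ref{semi-abelian} together with the known fact that $\mathsf{Grpd}(\mathcal{C})$ is semi-abelian whenever $\mathcal{C}$ is (citing \cite{Gran} and Lemma 4.1 of \cite{BG2}), and then transports the conclusion along the equivalences of Proposition \ref{equivalenceMajid} and Corollary \ref{cor2}. The only difference is that you sketch the inheritance argument for $\mathsf{Grpd}(\mathcal{C})$ where the paper simply cites references.
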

\begin{proof}
By Theorem \ref{semi-abelian} we know that $\Hopf_{K, coc}$ is semi-abelian, and this implies that the category $\mathsf{Grpd}(\Hopf_{K, coc})$ of internal groupoids in $\Hopf_{K, coc}$ is itself semi-abelian (see \cite{Gran}, and Lemma $4.1$ in \cite{BG2}). The result then follows from Proposition \ref{equivalenceMajid} and Corollary \ref{cor2}.\end{proof}

\subsection*{Acknowledgement}
The authors thank the referee for his/her useful remarks, which allowed us in particular to improve the proof of Proposition \ref{surjective}.

\bibliographystyle{plain}

\end{document}